\documentclass[letter]{amsart}
\usepackage{accents}
\usepackage{graphicx}
\usepackage{amsfonts}
\usepackage{amscd}
\usepackage{amssymb}
\usepackage{mathtools}
\usepackage{booktabs}
\usepackage{hyperref}
\usepackage{url}
\usepackage{tikz}
\usetikzlibrary{decorations.pathmorphing}
\usetikzlibrary{decorations.markings}
\usetikzlibrary{arrows.meta}

\tikzset{middlearrow/.style={
        decoration={markings,
            mark= at position 0.5 with {\arrow{Stealth[red]}} ,
        },
        postaction={decorate}
    }
}

\usepackage{amsthm}
\usepackage{verbatim}
\usepackage{calligra}
\usetikzlibrary{shapes}
\usepackage{multirow}
\usepackage[savepos]{zref}
\setcounter{tocdepth}{1}

\vfuzz2pt 
\hfuzz2pt 
\newtheorem{thm}{Theorem}[section]
\newtheorem{corollary}[thm]{Corollary}

\theoremstyle{definition}

\theoremstyle{remark}

\numberwithin{thm}{subsection}
\numberwithin{equation}{subsection}


\usepackage[mathscr]{euscript}
\usepackage[T2A,T1]{fontenc}


%
  {\begin{description}%
    \setlength{\itemsep}{2.5pt}%
    \setlength{\parskip}{5pt}}%
  {\end{description}}

\DeclareMathOperator{\GCD}{GCD}

\newcommand{\From}{\colon}
\newcommand{\inar}{\ar@{^{(}->}}
\newcommand{\onar}{\ar@{->>}}

\usepackage{accents}
\newlength{\dtildeheight}

\newcommand{\Matrix}[4]{ \left( \begin{array}{cc}  #1 & #2 \\  #3 & #4 \\ \end{array} \right) }

\makeatletter
\newcommand{\raisemath}[1]{\mathpalette{\raisem@th{#1}}}
\newcommand{\raisem@th}[3]{\raisebox{#1}{$#2#3$}}
\makeatother

\newcommand{\One}{\mathbf{1}}

\newcommand{\abs}[1]{\left\vert#1\right\vert}

\newcommand{\alg}[1]{\boldsymbol{\mathrm{#1}}}

\newcommand{\GG}{\mathbb G}
\newcommand{\EE}{\mathbb E}
\newcommand{\ZZ}{\mathbb Z}
\newcommand{\QQ}{\mathbb Q}

\newcommand{\ident}{\equiv}

\newcommand{\To}{\rightarrow}

\newcommand{\isom}{\cong}

\makeatletter
\newcommand\@biprod[1]{%
  \vcenter{\hbox{\ooalign{$#1\prod$\cr$#1\coprod$\cr}}}}
\newcommand\biprod{\mathop{\mathpalette\@biprod\relax}\displaylimits}
\makeatother

\DeclareMathAlphabet{\mathcalligra}{T1}{calligra}{m}{n}

\DeclareMathOperator{\red}{red}
\DeclareMathOperator{\blue}{blue}
\DeclareMathOperator{\Cl}{Cl}

\title{The arithmetic of arithmetic Coxeter groups}

\author{Suzana Milea, Christopher Shelley and Martin H. Weissman}

\address{Department of Mathematics, University of California, Santa Cruz, 1156 High Street, Santa Cruz, CA 95064}
\email{weissman@ucsc.edu}
\date{\today}

\begin{document}
\begin{abstract}
In the 1990s, J.H.~Conway published a combinatorial-geometric method for analyzing integer-valued binary quadratic forms (BQFs).  Using a visualization he named the ``topograph,'' Conway revisited the reduction of BQFs and the solution of quadratic Diophantine equations such as Pell's equation.  It appears that the crux of his method is the coincidence between the arithmetic group $PGL_2({\mathbb Z})$ and the Coxeter group of type $(3,\infty)$.  There are many arithmetic Coxeter groups, and each may have unforeseen applications to arithmetic.  We introduce Conway's topograph, and generalizations to other arithmetic Coxeter groups.  This includes a study of ``arithmetic flags'' and variants of binary quadratic forms.
\end{abstract}

\maketitle

\section{Conway's topograph}

Binary quadratic forms (BQFs) are functions $Q \From \ZZ^2 \To \ZZ$ of the form $Q(x,y) = ax^2 + bxy + cy^2$, for some integers $a,b,c$.  The discriminant of such a form is the integer $\Delta = b^2 - 4ac$.  In \cite{Conway}, J.H.~Conway visualized the values of a BQF through an invention he called the {\em topograph}.  

\subsection{The geometry of the topograph}
The topograph is an arrangement of points, edges, and faces, as described below.
\begin{itemize}
\item
Faces correspond to {\em primitive lax vectors}: coprime ordered pairs $\vec v = (x,y) \in \ZZ^2$, modulo the relation $(x,y) \sim (-x,-y)$.  Such a vector will be written $\pm \vec v$.
\item
Edges correspond to {\em lax bases}:  unordered pairs $\{ \pm \vec v, \pm \vec w \}$ of primitive lax vectors which form a $\ZZ$-basis of $\ZZ^2$.  (Clearly this is independent of sign choices.)
\item
Points correspond to {\em lax superbases}: unordered triples $\{ \pm \vec u, \pm \vec v, \pm \vec w \}$, any two of which form a lax basis.
\end{itemize}

Incidence among points, edges, and faces is defined by containment.  A {\em maximal arithmetic flag} in this context refers to a point contained in an edge contained in a face.  The geometry is displayed in Figure \ref{ConwayDT}; the points and edges form a ternary regular tree, and the faces are $\infty$-gons.
\begin{figure}
\centering
\includegraphics[width=.8\linewidth]{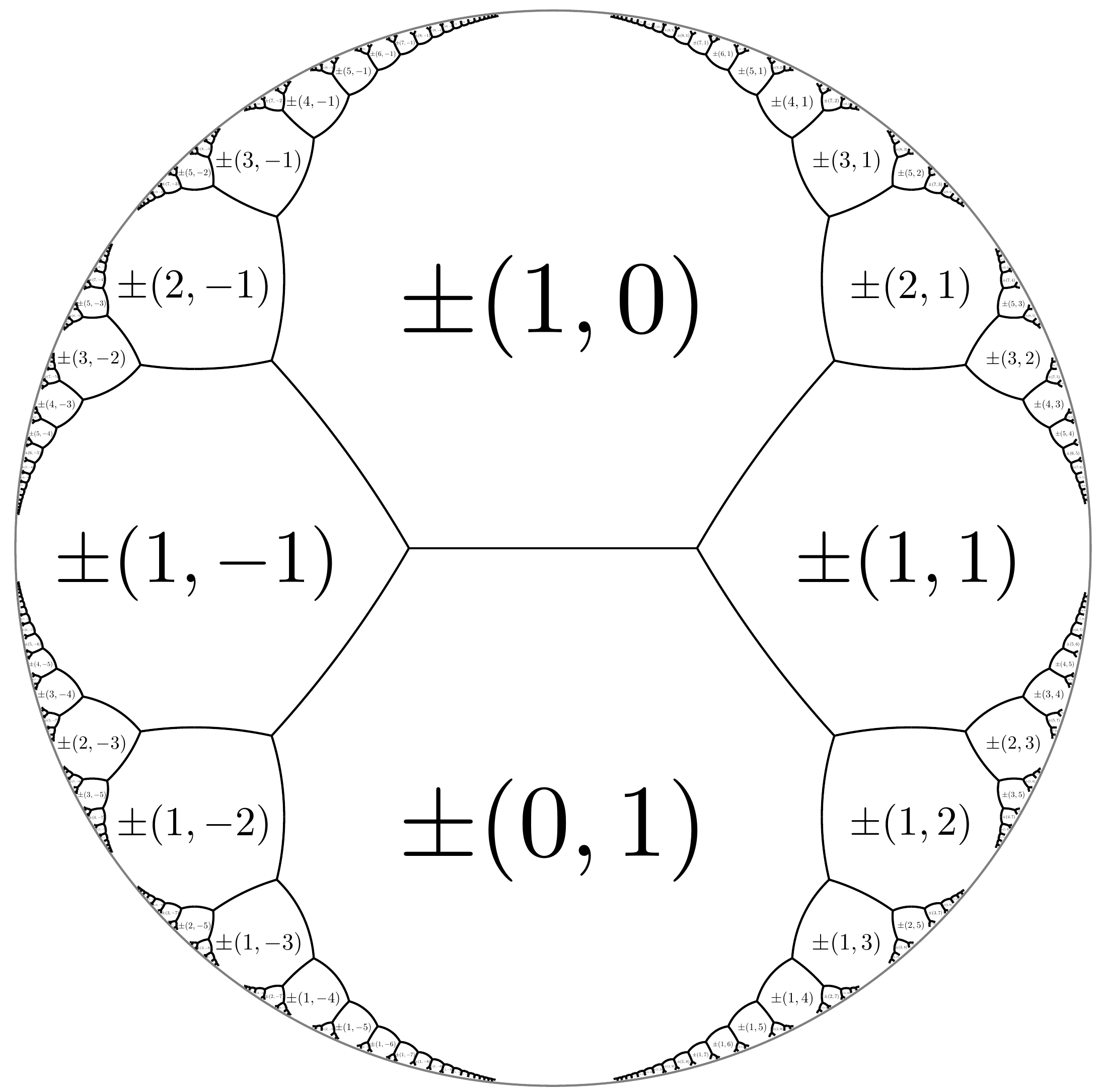}
\caption{Conway's geometry of primitive lax vectors, lax bases, and lax superbases.}
\label{ConwayDT}
\end{figure}
The group $PGL_2(\ZZ) = GL_2(\ZZ) / \{ \pm \One \}$ acts on the set of primitive lax vectors, and likewise on lax bases and superbases.  The action is simply-transitive on maximal arithmetic flags.

On the other hand, the geometry of Figure \ref{ConwayDT} also arises as the geometry of flags in the {\em Coxeter group} of type $(3,\infty)$.  This is the Coxeter group with diagram 
\tikz{
\filldraw (0,0) circle (0.05); 
\filldraw (1,0) circle (0.05); 
\filldraw (2,0) circle (0.05);
\draw (0,0) to node[above] {$3$} (1,0);
\draw (1,0) to node[above] {$\infty$} (2,0); 
}.  The group $W$ encoded by such a diagram is generated by elements $S = \{ s_0, s_1, s_2 \}$ corresponding to the nodes, modulo the relations $s_i^2 = 1$ (for $i = 0,1,2$), $s_0 s_2 = s_2 s_0$, and $(s_0 s_1)^3 = 1$.  If $T \subset S$ is a subset of nodes, write $W_T$ for the subgroup generated by $T$; it is called a {\em parabolic} subgroup.  The {\em flags} of type $T$ are the cosets $W / W_T$.  Incidence of flags is defined by intersection of cosets.  The Coxeter group $W$ acts simply-transitively on the maximal flags, i.e., the cosets $W / W_\emptyset = W$.

The geometric coincidence reflects the fact that $PGL_2(\ZZ)$ is isomorphic to the Coxeter group $W$ of type $(3,\infty)$, a classical result known to Poincar\'e and Klein.  But Conway's study of lax vectors, bases, and superbases goes further, giving an arithmetic interpretation of the flags for the Coxeter group.  This raises the natural question:  given a coincidence between an arithmetic group and a Coxeter group, is there an arithmetic interpretation of the flags in the Coxeter group?  We suggest a positive and interesting answer in later sections.

\subsection{Binary quadratic forms}

If one draws the values $Q(\pm \vec v)$ on the faces labeled by the primitive lax vectors $\pm \vec v$, one obtains Conway's {\em topograph} of $Q$.  Figures  \ref{TopGrow} and \ref{TopRiver} display examples.  If $u,v,e,f$ appear on the topograph of $Q$, in a local arrangement we call a {\em cell}, then Conway observes that the integers $e, u+v, f$ form an arithmetic progression.
\begin{center}
\begin{tikzpicture}[scale=0.5]
\draw (0,0) -- (1,0);
\draw (1,0) -- (1.5, 0.866);
\draw (1,0) -- (1.5, -0.866);
\draw (0,0) -- (120:1);
\draw (0,0) -- (240:1);
\draw (0.5, 0) node[above] {$u$} node[below] {$v$};
\draw (0,0) node[left] {$e$};
\draw (1,0) node[right] {$f$};
\draw (3,0) node[right] {$f - (u+v) = (u+v) - e.$ };
\end{tikzpicture}
\end{center}

The discriminant of $Q$ can be seen locally in the topograph, at every cell, by the formula  $\Delta = u^2 + v^2 + e^2 - 2uv - 2ve - 2eu = (u-v)^2 - ef$.


\begin{figure}[tbhp!]
\centering
\includegraphics[width=.8\linewidth]{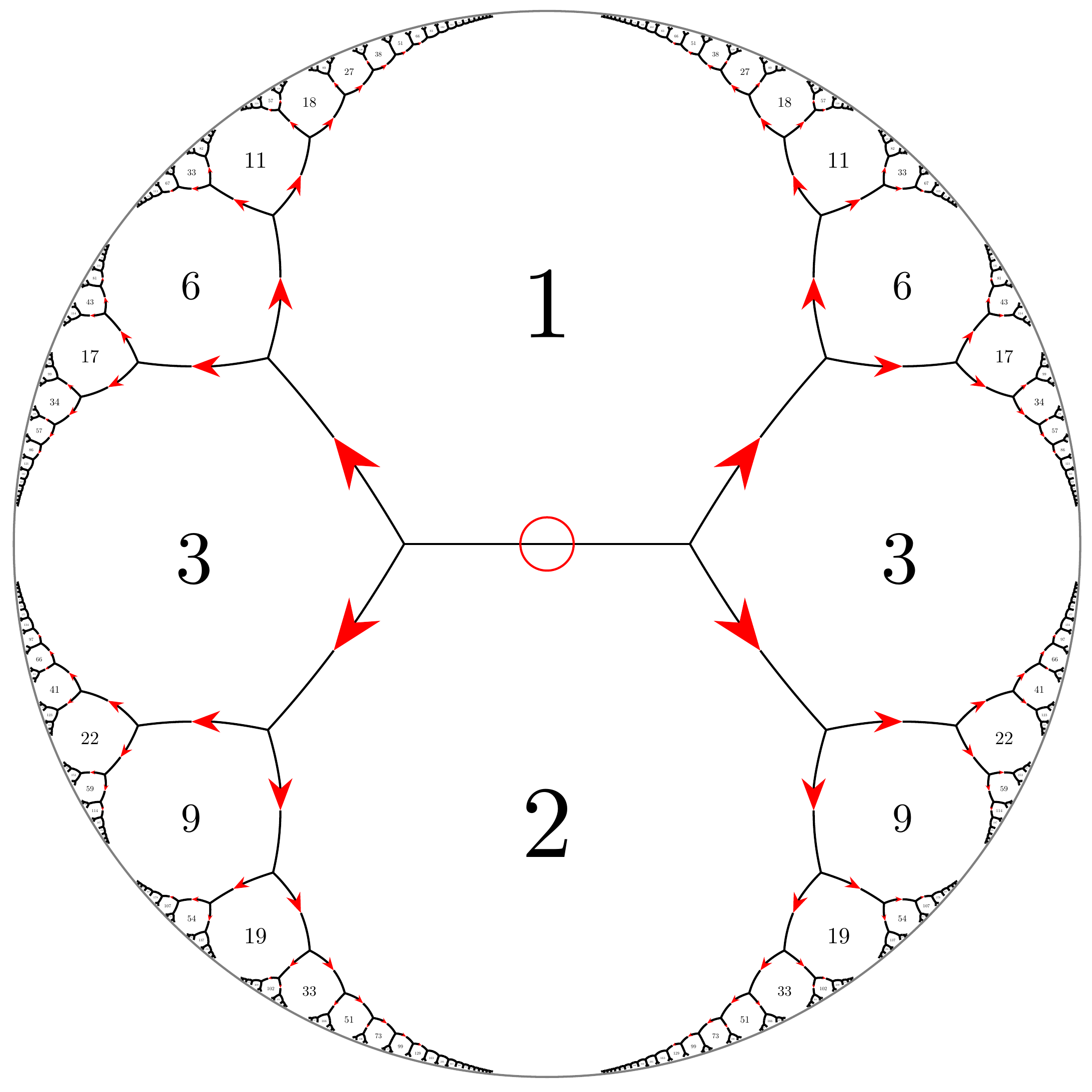}
\caption{The topograph of $Q(x,y) = x^2 + 2y^2$, with arrows exhibiting the climbing principle. The well (source of the flow) is the cell at the center of the figure.}
\label{TopGrow}
\end{figure}

A consequence of the arithmetic progression property is Conway's climbing principle; if all values in a cell are positive, place arrows along the edges in the directions of increasing arithmetic progressions.  Then every arrow propagates into two arrows; the resulting {\em flow} along the edges can have a source, but never a sink.  This implies the existence and uniqueness of a {\em well} for positive-definite forms: a triad or cell which is the source for the flow.  

The well gives the unique Gauss-reduced form $Q_{\mathrm{Gr}}$ in the $SL_2(\ZZ)$-equivalence class of $Q$.  More precisely, every well contains a triple $u \leq v \leq w$ of positive integers satisfying $u + v \geq w$, with strict inequality at triad-wells and equality at cell-wells (see Figure \ref{TopGrow}).  Depending on the orientation of $u,v,w$ at the well, the Gauss-reduced form is given in Figure \ref{wellreduced}; in the ambiguously-oriented case with $u = v$, $Q_{\mathrm{Gr}}(x,y) = u x^2 + (u+v-w) xy + vy^2$.  If $u+v = w$, both orientations occur in a cell-well, and $Q_{\mathrm{Gr}}(x,y) = ux^2 + v y^2$.  
\begin{figure}
\begin{center}
\begin{tikzpicture}[scale=0.6]
\draw[middlearrow] (0,0) -- (1,0);
\draw[middlearrow] (0,0) -- (120:1);
\draw[middlearrow] (0,0) -- (240:1);
\draw[red, thick] (0.65,0) circle (0.1);
\draw (0.5, 0) node[above] {$u$} node[below] {$v$};
\draw (0,0) node[left] {$w$};
\draw[|->] (1.5,0) -- (2.5,0) node[right] {$Q_{\mathrm{Gr}}(x,y) = ux^2 + (u+v-w) xy + vy^2$};
\end{tikzpicture}
\medskip

\begin{tikzpicture}[scale=0.6]
\draw[middlearrow] (0,0) -- (-1,0);
\draw[middlearrow] (0,0) -- (60:1);
\draw[middlearrow] (0,0) -- (-60:1);
\draw[red, thick] (-0.65,0) circle (0.1);
\draw (-0.5, 0) node[above] {$u$} node[below] {$v$};
\draw (-0,0) node[right] {$w$};
\draw[|->] (1.5,0) -- (2.5,0) node[right] {$Q_{\mathrm{Gr}}(x,y) = ux^2 - (u+v-w) xy + vy^2$};
\end{tikzpicture}
\end{center}
\caption{Every well corresponds to a Gauss-reduced form; in both diagrams, we assume $u \leq v \leq w$.  The Gauss-reduced form depends on the orientation.}
\label{wellreduced}
\end{figure}

When $Q$ is a nondegenerate indefinite form, Conway defines the {\em river} of $Q$ to be the set of edges which separate a positive value from a negative value in the topograph of $Q$.  Since all values on the topograph of $Q$ must be positive or negative, the river cannot branch or terminate.  The climbing principle implies uniqueness of the river.  Thus the river is a set of edges comprising a single endless line.  Bounding the values adjacent to the river implies periodicity of values adjacent to the river, and thus the infinitude of solutions to Pell's equation.  This is described in detail in \cite{Conway}.  {\em Riverbends} -- cells with a river as drawn below -- correspond to Gauss's reduced forms in the equivalence class of $Q$.
\begin{center}
\begin{tikzpicture}[scale=0.8,  decoration=snake]
\begin{scope}[xshift = -2.5cm]
\draw[cyan, thick, decorate] (0,0) -- (1,0);
\draw (1,0) -- (1.5, 0.866);
\draw[cyan, thick, decorate] (1,0) -- (1.5, -0.866);
\draw[cyan, thick, decorate] (0,0) -- (120:1);
\draw (0,0) -- (240:1);
\draw (0.5, 0) node[above=5] {$u > 0$} node[below=5] {$v < 0$};
\draw (0,0) node[left=5] {$e < 0$};
\draw (1,0) node[right=5] {$f > 0$};
\end{scope}
\begin{scope}[xshift = 2.5cm]
\draw[cyan, thick, decorate] (0,0) -- (1,0);
\draw[cyan, thick, decorate] (1,0) -- (1.5, 0.866);
\draw (1,0) -- (1.5, -0.866);
\draw (0,0) -- (120:1);
\draw[cyan, thick, decorate] (0,0) -- (240:1);
\draw (0.5, 0) node[above=5] {$u > 0$} node[below=5] {$v < 0$};
\draw (0,0) node[left=5] {$e > 0$};
\draw (1,0) node[right=5] {$f < 0$};
\end{scope}
\end{tikzpicture}
\end{center}
The existence of riverbends gives a classical bound, by an argument we learned from Gordan Savin.
\begin{thm}
If $Q$ is a nondegenerate indefinite BQF, then the minimum nonzero value $\mu_Q$ of $Q$ satisfies $\vert \mu_Q \vert \leq \sqrt{\Delta / 5}$.
\end{thm}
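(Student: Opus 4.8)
The plan is to locate the minimal value on Conway's topograph of $Q$ and read off the bound from the arithmetic‑progression and discriminant identities for the cells meeting the river. For a face $F$ attached to a primitive lax vector $\pm\vec v$ write $Q(F):=Q(\pm\vec v)\in\ZZ$; since $\Delta>0$ these values are all nonzero, positive on one side of the river and negative on the other. If $Q(x,y)=n\neq 0$ and $d=\gcd(x,y)$, then $(x/d,y/d)$ is primitive and $Q$ takes the value $n/d^2$ there with $|n/d^2|\le|n|$, so $|\mu_Q|$ is attained at a face; after replacing $Q$ by $-Q$ (which changes neither $\Delta$ nor $|\mu_Q|$) I may assume it equals a positive value $\mu:=\mu_Q=Q(P)>0$ at a face $P$, and then $\mu\le|Q(F)|$ for every face $F$.

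Next I would show $P$ is adjacent to the river. Suppose not; then every neighbour $G_i$ ($i\in\ZZ$) of $P$ across an edge of $P$ is a positive face. The cell about the $i$-th edge of $P$ has $P$ and $G_i$ across its central edge and $G_{i-1},G_{i+1}$ at its two vertices, so the arithmetic‑progression property reads $Q(G_{i-1})+Q(G_{i+1})=2(\mu+Q(G_i))$. Hence the second differences of $i\mapsto Q(G_i)$ are constantly $2\mu>0$, the sequence is discretely convex and tends to $+\infty$ in both directions, so it attains a minimum at some $i_0$. The discriminant identity for that cell then gives $\Delta=(\mu-Q(G_{i_0}))^2-Q(G_{i_0-1})Q(G_{i_0+1})\le(\mu-Q(G_{i_0}))^2-Q(G_{i_0})^2=\mu^2-2\mu\,Q(G_{i_0})$; since $\Delta>0$ this forces $\mu>2Q(G_{i_0})\ge 2\mu$, a contradiction. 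So $P$ meets the river.

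Finally I would analyse the river near $P$. Because the $1$-skeleton of the topograph is a tree, the edges of $P$ on the river form a block $E_1,\dots,E_k$ of consecutive edges of $P$ (otherwise an arc of $\partial P$ together with an arc of the river would be a cycle). Let $-m_j<0$ be the value of the face across $E_j$, so each $m_j\ge\mu$; the outer vertices of $E_1$ and $E_k$ carry positive faces of value $\ge\mu$. The arithmetic‑progression property at the cells about $E_1$ and $E_k$ rules out $k=1$ (it would force $2\mu\le 2\mu-2m_1\le 0$). If $k=2$, the cells about $E_1$ and $E_2$ give $m_2\ge 2m_1-\mu$ and $m_1\ge 2m_2-\mu$; summing and using $m_1,m_2\ge\mu$ forces $m_1=m_2=\mu$, and then the discriminant identity at the cell built on the edge joining the two negative faces yields $\Delta=5\mu^2$. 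If $k\ge 3$, the cell about the interior edge $E_2$ gives $m_1+m_3=2m_2-2\mu$, hence $m_2\ge 2\mu$, and its discriminant identity together with $m_1m_3\le((m_1+m_3)/2)^2=(m_2-\mu)^2$ gives $\Delta=(\mu+m_2)^2-m_1m_3\ge 4\mu m_2\ge 8\mu^2$. In every case $\Delta\ge 5\mu^2$, i.e. $|\mu_Q|\le\sqrt{\Delta/5}$, with equality exactly when $k=2$ (as for $Q=x^2+xy-y^2$, where $\Delta=5$ and $\mu_Q=1$).

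The step I expect to be the main obstacle is identifying the right faces: proving the minimum occurs on a face meeting the river, and, in the last step, keeping careful track of which faces sit at the vertices of the cells used as the river bends around $P$. That is where the tree structure (no cycles) and the positive/negative sign pattern along the river are essential; once the local configuration is pinned down, Conway's two identities do the rest, and the case $k=2$ is precisely what makes the constant $\sqrt{1/5}$ sharp.
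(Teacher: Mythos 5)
Your argument is correct, but it is organized quite differently from the paper's. The paper's proof is a two--line pigeonhole at a riverbend: there $u>0>v$ and $ef<0$, so $\Delta=(u-v)^2-ef=u^2+v^2+(-uv)+(-vu)+(-ef)$ is a sum of five \emph{positive} integers, one of which is therefore at most $\Delta/5$; since each of the five terms dominates $\min(|u|,|v|,|e|,|f|)^2$, some value adjacent to the riverbend is at most $\sqrt{\Delta/5}$ in absolute value, and a fortiori so is $|\mu_Q|$. You never use this five-term decomposition. Instead you locate the face $P$ realizing the minimum, rule out $P$ being river-free via the discrete convexity of the values around $P$ (second differences equal to $2\mu$) combined with the discriminant identity at the minimizing neighbour, and then case-analyze the length $k$ of the block of river edges on $\partial P$, obtaining $\Delta=5\mu^2$ when $k=2$ and $\Delta\ge 8\mu^2$ when $k\ge 3$. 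What your route buys: it is self-contained about the existence of a ``bend'' (the paper's proof tacitly uses that a riverbend exists, which needs the separate observation that a straight river would produce a bi-infinite quadratic sequence of constant sign), it pins down where the minimum sits, and it identifies the equality case $\Delta=5\mu_Q^2$. What it costs is length and some delicate local combinatorics, which you do handle correctly: the endpoint faces of the cell about an edge of $P$ are the neighbours of $P$ across the adjacent edges, and the river meets $\partial P$ in a subpath because the $1$-skeleton is a tree. Two small points: the parenthetical ``since $\Delta>0$ these values are all nonzero'' is not the right justification ($x^2-y^2$ has $\Delta=4>0$ yet represents $0$); nonvanishing is exactly what ``nondegenerate'' supplies. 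And in the $k\ge 3$ case the block could a priori be infinite (a ray or the whole river), but an interior edge still exists there and your estimate $\Delta\ge 4\mu m_2\ge 8\mu^2$ applies verbatim.
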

\begin{proof}
At a riverbend, one finds $\Delta = (u-v)^2 - ef = u^2 + v^2 - uv - vu - ef$, the sum of five {\em positive} integers.  It follows that one of $u^2, v^2, -uv, -vu, -ef$ must be bounded by $\Delta/5$.  Among $\vert u \vert, \vert v \vert, \vert e \vert, \vert f \vert$, one must be bounded by $\sqrt{\Delta/5}$.
\end{proof}

These are some highlights and applications of Conway's topograph.  In the next sections, we describe generalizations.
\begin{figure}
\centering
\includegraphics[width=.8\linewidth]{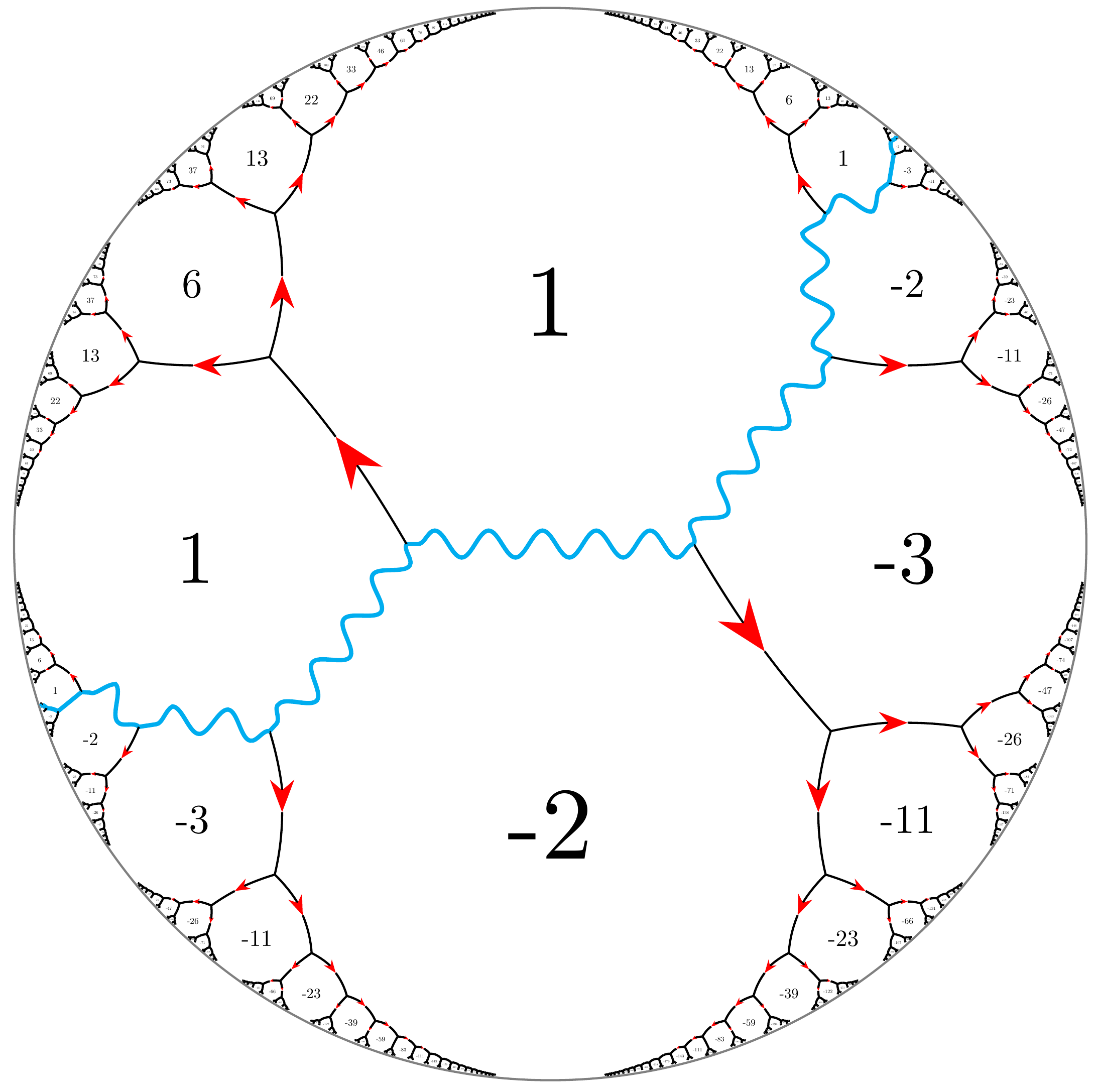}
\caption{The topograph of $Q(x,y) = x^2 - 3y^2$, exhibiting a periodic river.  Solutions to Pell's equation $x^2 - 3y^2 = 1$ are found along the riverbank.}
\label{TopRiver}
\end{figure}

\section{Gaussian and Eisenstein analogues}

Let $\GG$ denote the Gaussian integers:  $\GG = \ZZ[i]$.  Let $\EE$ denote the Eisenstein integers:  $\EE = \ZZ[e^{2 \pi i / 3}]$.  

\subsection{Arithmetic flags and honeycombs}

One may generalize Conway's vectors, bases, and superbases to arithmetic structures in $\GG^2$ and $\EE^2$. Guiding this are embeddings of $PSL_2(\GG)$ and $PSL_2(\EE)$ into hyperbolic Coxeter groups.  In \cite[\S1.I, 1.II, \S3, \S6]{Bianchi}, Bianchi describes generators for $SL_2(\GG)$ and $SL_2(\EE)$, and fundamental polyhedra for their action on hyperbolic 3-space.  Using reflections in the faces of these polyhedra, one may write explicit presentations of these groups; Fricke and Klein carry this out for $SL_2(\GG)$ in \cite[I.\S8]{FK}, where one finds a connection to the (later-named) Coxeter group of type $(3,4,4)$.  Schulte and Weiss give a detailed treatment, proving the following in \cite[Theorems 7.1,9.1]{SW}.
\begin{thm}
\label{GEgroups}
$PSL_2(\GG)$ is isomorphic to an index-two subgroup of $(3,4,4)^+$.  $PSL_2(\EE)$ is isomorphic to an index-two subgroup of $(3,3,6)^+$.   
\end{thm}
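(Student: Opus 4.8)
The plan is to exhibit $PSL_2(\GG)$, $PSL_2(\EE)$, and the two Coxeter groups as subgroups of a single group, $\mathrm{Isom}^+(\mathbb{H}^3)\cong PSL_2(\CC)$, to establish an inclusion by realizing generators geometrically, and then to pin down the index by a covolume computation. I would treat the two cases in parallel: write $\OO$ for $\GG$ or $\EE$, let $K$ be its fraction field $\QQ(i)$ or $\QQ(\sqrt{-3})$, and let $W$ be the Coxeter group of type $(3,4,4)$ or $(3,3,6)$, with rotation subgroup $W^{+}=(3,4,4)^{+}$, resp. $(3,3,6)^{+}$.

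First I would fix the common ambient group. The group $PSL_2(\CC)$ acts on hyperbolic $3$-space $\mathbb{H}^3$ (upper half-space model) by orientation-preserving isometries, and $PSL_2(\OO)$ is a lattice in it, the relevant Bianchi group. Each of the string diagrams $(3,4,4)$, $(3,3,6)$ has four nodes, so $W$ is generated by the reflections in the four faces of a hyperbolic Coxeter tetrahedron $T\subset\mathbb{H}^3$ with a single ideal vertex — the link of that vertex being the Euclidean Coxeter group $(4,4)$, resp. $(3,6)$ — so that $T$ is noncompact of finite volume. Since a hyperbolic Coxeter group acts faithfully via its geometric representation, $W^{+}$ is realized faithfully as a lattice in $PSL_2(\CC)$; it therefore suffices to prove that $PSL_2(\OO)$ is an index-two subgroup of this concrete lattice.

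Next I would import Bianchi's explicit data. Following \cite[\S1.I, 1.II, \S3, \S6]{Bianchi} (and the older computation of Fricke and Klein \cite[I.\S8]{FK}), one records a fundamental polyhedron $P$ for the action of $PSL_2(\OO)$ on $\mathbb{H}^3$ together with its side-pairing transformations: essentially the translations $z\mapsto z+1$ and $z\mapsto z+i$ (resp. $z\mapsto z+e^{2\pi i/3}$), the inversion $z\mapsto -1/z$, and the one or two further pairings on Bianchi's list. By Poincar\'e's polyhedron theorem these generate $PSL_2(\OO)$, with a presentation furnished by the cycle relations of $P$. The heart of the argument is then to subdivide $P$ along the mirror arrangement of $W$ and to verify that it decomposes into exactly four copies of $T$, the ideal vertex of $T$ lying over a cusp of $P$; equivalently, one checks by a direct $2\times 2$ computation over $\OO$ that each side-pairing generator of $P$ is a product of two reflections in mirrors of the Coxeter tiling. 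This exhibits the inclusion $PSL_2(\OO)\subseteq W$, and since each generator is a word of even length in the reflections, $PSL_2(\OO)\subseteq W^{+}$.

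It remains to compute the index, and here I would use covolumes. By Humbert's volume formula, $\mathrm{vol}(\mathbb{H}^3/PSL_2(\OO))=|D_K|^{3/2}\,\zeta_K(2)/4\pi^2$; evaluating the Dedekind zeta value through the Lobachevsky function, the right-hand side equals $4\,\mathrm{vol}(T)$ in both cases ($D_K=-4$ and $D_K=-3$). As $\mathrm{vol}(\mathbb{H}^3/W)=\mathrm{vol}(T)$ and $\mathrm{vol}(\mathbb{H}^3/W^{+})=2\,\mathrm{vol}(T)$, the inclusion of the previous step has index $[W^{+}:PSL_2(\OO)]=2$, which is the assertion. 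This is the route carried out in detail by Schulte and Weiss \cite[Theorems 7.1, 9.1]{SW}, to whom we refer for the verification. The principal obstacle is the subdivision step: correctly locating the four mirrors of $T$ in the half-space model and confirming, case by case, that each of Bianchi's side-pairings really is the claimed product of two reflections, the combinatorics at the ideal vertex being the fiddly point. A covolume count on its own would only match a numerical index — without the explicit inclusion it would not even give commensurability.
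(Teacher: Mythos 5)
Your strategy is sound, and it is worth saying up front that the paper itself offers no proof of this theorem at all: it simply cites Schulte and Weiss \cite[Theorems 7.1, 9.1]{SW}, so there is nothing internal to compare against except that citation. Your geometric route --- realize $W^{+}$ and $PSL_2(\OO)$ as lattices in $PSL_2(\CC)$, check that Bianchi's side-pairings are products of pairs of reflections in the $(3,4,4)$ (resp.\ $(3,3,6)$) mirror arrangement to get $PSL_2(\OO)\subseteq W^{+}$, then compare covolumes --- is the standard way to establish such a coincidence, and your arithmetic checks out: Humbert's formula gives covolume $\approx 0.3053$ for $\ZZ[i]$ and $\approx 0.1692$ for $\ZZ[\omega]$, which are indeed $4\,\mathrm{vol}(T)$ for the respective Coxeter tetrahedra, so the index in $W^{+}$ (of covolume $2\,\mathrm{vol}(T)$) is $2$. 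Two small remarks. First, your argument is slightly redundant: if you really verify that Bianchi's fundamental polyhedron decomposes into exactly four copies of $T$, the index-two claim follows from that decomposition together with the inclusion, and Humbert's formula is not needed; conversely, the inclusion plus Humbert suffices without counting tetrahedra. Either half can be dropped. Second, you correctly identify the genuine content as the mirror-by-mirror verification that each side-pairing generator lies in $W^{+}$; that step is exactly what \cite{SW} supplies (they work instead with explicit matrix generators realizing the Coxeter presentation and identify the even subgroup directly, a more algebraic but equivalent packaging). Since you, like the authors, ultimately lean on \cite{SW} for that verification, your proposal is an honest reconstruction rather than a self-contained proof --- but so is the paper's.
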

Here $(a,b,c)^+$ denotes the even subgroup of the Coxeter group of type $(a,b,c)$.  As the Coxeter groups of types $(3,4,4)$ and $(3,3,6)$ are commensurable to $PSL_2(\GG)$ and $PSL_2(\EE)$, respectively, we expect an arithmetic interpretation of the Coxeter geometries.  Such an arithmetic incidence geometry is described below.

\begin{itemize}
\item
Cells correspond to {\em primitive lax vectors}: coprime ordered pairs $\vec v = (x,y) \in \GG^2$ (respectively $\EE^2$), modulo the relation $(x,y) \sim (\epsilon x, \epsilon y)$ for all $\epsilon \in \GG^\times$ (resp., $\epsilon \in \EE^\times$).
\item
Faces correspond to {\em lax bases}: unordered pairs $\{ \epsilon \vec v, \epsilon \vec w \}$ of primitive lax vectors which form a $\GG$-basis of $\GG^2$ (respectively $\EE$-basis of $\EE^2$).
\item
Edges correspond to {\em lax superbases}:  unordered triples $\{ \epsilon \vec u, \epsilon \vec v, \epsilon \vec w \}$, any two of which form a lax basis.
\item
Points of the Eisenstein topograph correspond to {\em lax tetrabases}: unordered quadruples $\{ \epsilon \vec s,\epsilon\vec t, \epsilon\vec u, \epsilon\vec v \}$, any three of which form a lax superbasis.
\item
Points of the Gaussian topograph correspond to {\em lax cubases}: sets of three two-element sets $\{ \{ \epsilon \vec u_1, \epsilon \vec u_2 \}, \{ \epsilon \vec v_1, \epsilon \vec v_2 \},  \{ \epsilon \vec w_1, \epsilon \vec w_2 \} \}$, such that all eight choices of $i,j,k \in \{ 1,2 \}$ give a lax superbasis $\{ \epsilon \vec u_i, \epsilon \vec v_j, \epsilon \vec w_k \}$. 
\end{itemize}
Incidence is given by the obvious containments described above.  We call this incidence geometry the {\em topograph} for $\EE$ or $\GG$, and it is equipped with an action of $PSL_2(\EE)$ and $PSL_2(\GG)$, respectively.  The terms {\em tetrabasis} and {\em cubasis} reflect the residual geometry around a point (see Figure \ref{TetraCube}). Both geometries produce regular hyperbolic honeycombs \cite[Ch.~IV]{Coxeter}; the points, edges, and faces around each cell form square or hexagonal planar tilings in the Gaussian or Eisenstein case, respectively.  

The Gaussian and Eisenstein topographs are described by Bestvina and Savin in \cite[\S 7,8]{BS}.  Both topographs, and the following link to Coxeter geometries, are given in the PhD thesis of the second author.  
\begin{figure}
\centering
\includegraphics[width=.8\linewidth]{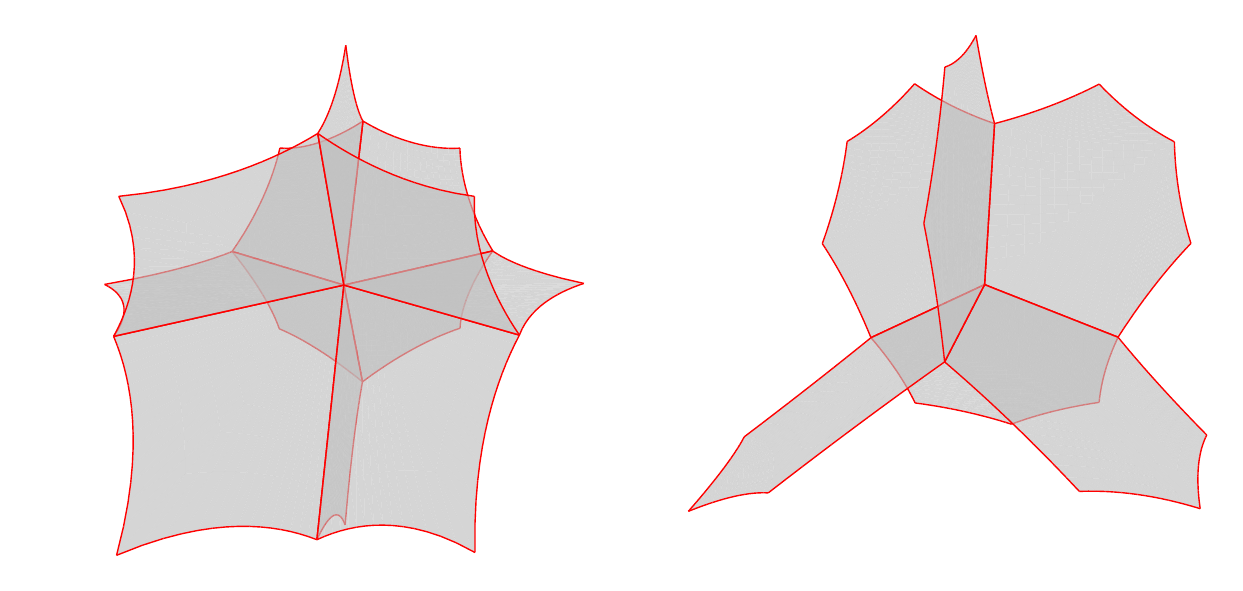}
\caption{The geometry of the Gaussian and Eisenstein topographs, displaying square and hexagonal faces, and cubic and tetrahedral residues at a point.}
\label{TetraCube}
\end{figure}

\begin{thm}
The topographs for $\EE^2$ and $\GG^2$ are equivariantly isomorphic to the Coxeter geometries of types (3,3,6) and (3,4,4), respectively.
\end{thm}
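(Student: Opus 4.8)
The plan is to treat both sides as incidence geometries equipped with group actions, fix a base maximal flag on each, and construct an isomorphism of the relevant symmetry groups carrying one base flag to the other. Since both groups act simply transitively on maximal flags, the geometric isomorphism then follows, provided one checks that the parabolic subgroups of the Coxeter group correspond to the correct face-stabilizers on the arithmetic side. I will describe the argument for $\GG$ and type $(3,4,4)$; the Eisenstein case is formally identical, with squares replaced by hexagons and cubes by tetrahedra.

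First I would enlarge the acting group. The definition of lax vectors already incorporates the $\GG^\times$-action, so the diagonal matrices $\mathrm{diag}(1,\eps)$ with $\eps\in\GG^\times$, together with $PSL_2(\GG)$, generate a copy of $PGL_2(\GG)$ on the topograph, and adjoining the coordinatewise complex conjugation $(x,y)\mapsto(\bar x,\bar y)$ yields a group $\widehat G=PGL_2(\GG)\rtimes\langle\mathrm{conj}\rangle$. I would then verify that $\widehat G$ acts simply transitively on maximal arithmetic flags (a primitive lax vector inside a lax basis inside a lax superbasis inside a lax cubasis). Transitivity is the usual reduction: since $\GG$ is a principal ideal domain, $PSL_2(\GG)$ moves any primitive lax vector to $\pm(1,0)$ and then any lax basis through it to $\{(1,0),(0,1)\}$; at the next stage the $\GG^\times$-diagonals move any of the $|\GG^\times|=4$ lax superbases onto $\{(1,0),(0,1),(1,1)\}$, and at the last stage a direct check shows complex conjugation interchanges the two lax cubases completing that superbasis — so these two extra ingredients account exactly for the index $[\widehat G:PSL_2(\GG)]=4$, while the stabilizer of the resulting base flag is trivial by a short matrix computation. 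On the Coxeter side $W=(3,4,4)$ acts simply transitively on $W/W_\emptyset$, and by Theorem \ref{GEgroups} $PSL_2(\GG)$ sits with index two in $(3,4,4)^+$, hence with index four in $W$. Identifying $PGL_2(\GG)$ with $(3,4,4)^+$ (the diagonal $\mathrm{diag}(1,i)$ generating the quotient by $PSL_2(\GG)$) and conjugation with the reflection generating $W/(3,4,4)^+$ then upgrades Theorem \ref{GEgroups} to an isomorphism $W\cong\widehat G$ sending the identity coset to the base maximal flag. Equivalently — and perhaps more cleanly — I would exhibit the four fundamental reflections of $W$ directly as explicit elements of $\widehat G$ (matrices over $\GG$, some composed with conjugation), each characterized as the unique element of $\widehat G$ fixing all but one of the four parts of the base flag, and check the Coxeter relations of the $(3,4,4)$ diagram.

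Transport of structure through this isomorphism gives a bijection between maximal flags on the two sides; to upgrade it to an isomorphism of incidence geometries, I would identify an object of each type with the set of maximal flags through it — a coset $gW_T$ on the Coxeter side and a stabilizer-orbit of the base flag on the arithmetic side — and check, for each node $s_i$, that the $\widehat G$-stabilizer of the type-$i$ part of the base flag equals the image of the maximal parabolic $W_{S\setminus\{s_i\}}$; the remaining parabolics are intersections of these. This reduces to inspecting the rank-$2$ residues. Around a fixed lax basis $\{u,v\}$ there are exactly $|\GG^\times|=4$ lax superbases, namely $\{u,v,u+\eps v\}$ with $\eps\in\GG^\times$, which produces the label $4$ joining the point-node and the edge-node; the residue around a lax vector is the square tiling $\{4,4\}$ and the residue around a lax cubasis is a cube $\{4,3\}$, producing the remaining labels $4$ and $3$, so the diagram has successive labels $4,4,3$, i.e.\ type $(3,4,4)$. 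In the Eisenstein case one finds $|\EE^\times|=6$, the cell-residues are the hexagonal tiling $\{6,3\}$, and the point-residues are tetrahedra $\{3,3\}$, giving successive labels $6,3,3$, i.e.\ type $(3,3,6)$.

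The hard part will be passing from this local data to the global conclusion — that the topograph \emph{is} the Coxeter geometry of its type, rather than merely being covered by it. Combinatorially this is the assertion that the chamber system of the topograph is connected, residually connected, thin, and simply connected; the first three properties are immediate from the definitions, but the last is where hyperbolic geometry must enter: via Bianchi's fundamental polyhedra for $PSL_2(\GG)$ and $PSL_2(\EE)$ and the Schulte--Weiss reflection-group presentations underlying Theorem \ref{GEgroups}, the topograph is realized as the regular honeycomb $\{4,4,3\}$ (respectively $\{6,3,3\}$) tessellating the contractible space $\mathbb{H}^3$, whose chamber complex is therefore simply connected, so Tits' recognition theorem for chamber systems of Coxeter type applies. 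A secondary technical point is to confirm that the combinatorially-defined lax cubasis — three two-element sets of lax vectors, with all eight cross-triples lax superbases — and the lax tetrabasis genuinely exhibit the incidence pattern of a cube and of a tetrahedron; this is a finite check for a single example, which the simply transitive $\widehat G$-action then propagates everywhere.
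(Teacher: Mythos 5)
The paper offers no proof of this theorem at all: it simply asserts the result, pointing to the second author's PhD thesis for the link to Coxeter geometries and to Bestvina--Savin \cite[\S 7,8]{BS} for the construction of the topographs. So there is nothing in the text to compare your argument against line by line; what you have written is a self-contained strategy, and it is the standard and correct one. Two remarks. First, a bookkeeping slip: you attribute the four lax superbases $\{u,v,u+\eps v\}$ over a fixed lax basis to the four diagonal units $\mathrm{diag}(1,\eps)$, and then count ``$4\times 2$'' extra ingredients against the index $[\widehat G:PSL_2(\GG)]=4$. But $\mathrm{diag}(1,-1)$ is already in the image of $PSL_2(\GG)$ inside $PGL_2(\GG)$ (it equals $\mathrm{diag}(-i,i)$ modulo scalars), so the diagonal units contribute only a factor of $2$ to the index; the point is rather that the stabilizer of the partial flag (vector $\subset$ basis) inside $PSL_2(\GG)$ is itself nontrivial and permutes the four superbases in two $2$-cycles. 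The numbers still reconcile, but as written the accounting does not literally add up, and since simple transitivity is the load-bearing claim you should do this stabilizer computation carefully rather than by the heuristic ``one new generator per layer.'' Second, once you have the group isomorphism $W\cong\widehat G$ matching base flags and matching each $s_i$ with the unique nontrivial element of $\widehat G$ fixing the cotype-$i$ partial flag, the map $w\mapsto w\cdot(\text{base flag})$ is already an isomorphism of chamber systems; you do not need Tits' recognition theorem or simple connectivity of the honeycomb, which is good because obtaining simple connectivity from the realization in $\mathbb H^3$ is uncomfortably close to assuming the conclusion. What you \emph{do} still need at that stage is residual connectedness on the arithmetic side (so that each object --- lax vector, basis, superbasis, cubasis/tetrabasis --- is recovered as the full parabolic coset of chambers through it, and its $\widehat G$-stabilizer is exactly the corresponding parabolic and not larger); that check, together with the finite verification that a cubasis and a tetrabasis really carry the residual incidence pattern of a cube and a tetrahedron, is where the remaining work lives.
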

By equivariance, we mean that the isomorphism intertwines the natural actions of $PSL_2(\EE)$ and $PSL_2(\GG)$ on one hand with the actions of the Coxeter groups on the other, via the inclusion described in Theorem \ref{GEgroups}.


\subsection{Binary Hermitian forms}

An integer-valued binary Hermitian form (BHF), over $\EE$ or $\GG$, is a function $H \From \EE^2 \To \ZZ$ or $\GG^2 \To \ZZ$, of the form
$$H(x,y) = a x \bar x + \beta \bar x y + \bar \beta x \bar y + c y \bar y.$$
Here we assume $a,c \in \ZZ$, and $\beta \in (1 - \omega)^{-1} \EE$ or $\beta \in (1 + i)^{-1} \GG$ (the inverse different of $\EE$ or $\GG$, respectively).  The discriminant of $H$ is the integer defined by
$$\Delta = (3 \text{ or } 4) \cdot (\beta \bar \beta - ac), \text{ for $\EE$ or $\GG$, respectively.}$$

Fricke and Klein discuss reduction theory of Hermitian forms over $\GG$, using the geometry of $SL_2(\GG)$, in \cite[III.1, \S1--8]{FK}.  The topographs give a new approach, pursued by Bestvina and Savin \cite{BS}.  

Let $H$ be a BHF over $\EE$ or $\GG$.  Recalling that cells of the topographs correspond to primitive lax vectors, we define the {\em topograph} of $H$ to be the result of placing the value $H(\epsilon \vec v)$ at the topograph-cell marked by the primitive lax vector $\epsilon \vec v$.  The most interesting case occurs when $H$ is nondegenerate indefinite (taking positive and negative values, but never zero on a nonzero vector input).  In this case, Conway's river is replaced by the {\em ocean} -- the set of faces separating a cell with positive value from one with negative value.  Bestvina and Savin prove \cite[Theorems 5.3, 6.1]{BS} that this ocean is topologically an open disk, locally CAT(0) as a metric space, and the unitary group $U(H)$ acts cocompactly on the ocean.

Reduced indefinite BQFs correspond to riverbends in Conway's topograph.  In a similar way, one finds reduced indefinite BHFs at the points of the ocean of negative curvature, i.e., where more than four ocean-squares (for $\GG$) or more than three ocean-hexagons (for $\EE$) meet at a point.  From this, Bestvina and Savin \cite[Theorem 8.7]{BS} recover the optimal bound on the minima of nondegenerate indefinite BHFs over $\EE$.  The bound for $\GG$ can be obtained by the same method.
\begin{thm}
Let $H$ be a nondegenerate indefinite BHF.  Then the minimum nonzero value $\mu_H$ satisfies $\abs{ \mu_H }\leq \sqrt{\Delta/6}$.
\end{thm}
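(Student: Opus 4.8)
The plan is to follow the method of Bestvina and Savin: localize at a point of negative curvature of the ocean and turn the excess‑angle condition there into the inequality $\Delta \ge 6\abs{\mu_H}^2$. First I would invoke the structure of the ocean from \cite{BS}: for a nondegenerate indefinite $H$ it is a locally CAT(0) disk on which $U(H)$ acts cocompactly, and it carries points of negative curvature (these are precisely where the reduced indefinite forms live). Over $\EE$, such a point is a lax tetrabasis $\{\vec s_1,\vec s_2,\vec w_1,\vec w_2\}$ whose four cells have $H$‑values of sign pattern $(+,+,-,-)$ --- say $H(\vec s_i)=a_i>0$ and $H(\vec w_j)=-b_j<0$ --- since this is the only pattern in which four rather than three ocean‑hexagons meet at a point. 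Over $\GG$ it is a lax cubasis around which more than four ocean‑squares meet; as the link of a point is a cube, whose $1$‑skeleton is bipartite, these ocean‑squares form a single cycle of even length $k\ge 6$, along which the incident cells alternate in sign.

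Second, and this is the crux, I would establish a local identity. Choose coordinates so that the two cells of one of the ocean faces at the point are $\vec s_1=(1,0)$ and $\vec w_1=(0,1)$; then $H(x,y)=a_1 x\bar x+\beta\bar x y+\bar\beta x\bar y-b_1 y\bar y$ and $\Delta=3(\beta\bar\beta+a_1 b_1)$ (resp.\ $4(\beta\bar\beta+a_1 b_1)$ over $\GG$). Since $\{\vec s_1,\vec w_1,\vec s_2\}$ and $\{\vec s_1,\vec w_1,\vec w_2\}$ are lax superbases while $\{\vec s_2,\vec w_2\}$ is a lax basis, the cells $\vec s_2=(1,\sigma)$ and $\vec w_2=(1,\tau)$ are completions of $\{(1,0),(0,1)\}$ by units $\sigma,\tau$ whose difference is again a unit; evaluating $H$ gives $X:=2\,\mathrm{Re}(\beta\sigma)=a_2-a_1+b_1$ and $Y:=2\,\mathrm{Re}(\beta\tau)=b_1-a_1-b_2$, hence the key relation $X-Y=a_2+b_2$. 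The computation I expect to carry out is: check, using the norm form of $\EE=\ZZ[\omega]$ applied to $\gamma:=(1-\omega)\beta$ (resp.\ that of $\GG=\ZZ[i]$ applied to $\gamma:=(1+i)\beta$), that $3\beta\bar\beta=X^2-XY+Y^2$ for such a consecutive pair; then substitute $X-Y=a_2+b_2$ into $\Delta=X^2-XY+Y^2+3a_1 b_1$ and verify that the cross terms cancel, leaving
\[
\Delta \;=\; \sum_{\{i,j\}}\abs{H(\vec c_i)}\,\abs{H(\vec c_j)} \;+\;(a_1-a_2)^2+(b_1-b_2)^2,
\]
the sum over the $\binom{4}{2}$ pairs among the four cells $\vec c_1,\dots,\vec c_4$ at the point. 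Over $\GG$, the parallel computation at a negative‑curvature point should give $\Delta\ge\sum_{i=1}^{k}\abs{H(\vec c_i)}\,\abs{H(\vec c_{i+1})}$, one term for each of the $k\ge 6$ ocean‑squares meeting it.

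Third, I would conclude: each $\vec c_i$ is a primitive lax vector, so $\abs{H(\vec c_i)}\ge\abs{\mu_H}$, whence the displayed sum of (at least) six products is $\ge 6\abs{\mu_H}^2$; hence $\Delta\ge 6\abs{\mu_H}^2$, that is, $\abs{\mu_H}\le\sqrt{\Delta/6}$. The bound is optimal: over $\EE$ the form $H(x,y)=x\bar x+\bar x y+x\bar y-y\bar y$ has $\Delta=6$ and $\mu_H=1$, and its negative‑curvature point is the tetrabasis $\{(1,0),(0,1),(1,\omega),(1,-\omega^2)\}$, at which all four cell values equal $\pm 1$ and the two correction squares vanish.

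The main obstacle is the second step. Getting the constant exactly $6$ --- the same for $\EE$ and $\GG$, rather than the $4$ produced by a single ocean face --- relies on the relation $X-Y=a_2+b_2$, which encodes the negative‑curvature sign pattern (that $\vec s_2,\vec w_2$ form a sign‑opposite consecutive pair of completions), together with the precise shape of the two norm forms; one must confirm that after the cancellation what remains is a genuinely nonnegative expression. Two ancillary points also need care: that every ocean contains a negative‑curvature point of the stated combinatorial type, to be read off from \cite{BS}; and, over $\GG$, that the local types in which $6$ or $8$ ocean‑squares meet are enumerated, so that $\Delta\ge 6\abs{\mu_H}^2$ is verified in each.
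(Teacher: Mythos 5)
Your route is genuinely different from the paper's, and the two halves of it fare differently. For the Eisenstein case the paper simply cites Bestvina--Savin, whereas you give a direct local computation; I checked it and it is correct. Writing $X=2\,\mathrm{Re}(\beta\sigma)$, $Y=2\,\mathrm{Re}(\beta\tau)$ for units $\sigma,\tau$ with $\tau-\sigma$ a unit, one indeed has $3\beta\bar\beta=X^2-XY+Y^2$ (because $\tau/\sigma$ is a primitive sixth root of unity, so $\sigma^2-\sigma\tau+\tau^2=0$ and $\sigma\bar\tau+\bar\sigma\tau=1$), and substituting $X=a_2-a_1+b_1$, $Y=b_1-a_1-b_2$ into $\Delta=X^2-XY+Y^2+3a_1b_1$ does yield
\[
\Delta=a_1a_2+b_1b_2+a_1b_1+a_1b_2+a_2b_1+a_2b_2+(a_1-a_2)^2+(b_1-b_2)^2\;\geq\;6\mu_H^2 ,
\]
exactly as you claim, with your $\Delta=6$ example showing sharpness. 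This is a nice self-contained replacement for the citation.

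The Gaussian half, however, has a genuine gap, and it is the one you flagged. The inequality $\Delta\geq\sum_i\abs{H(\vec c_i)}\,\abs{H(\vec c_{i+1})}$ over the $k\geq 6$ consecutive ocean-squares is asserted (``should give'') but never derived, and it is not the identity that actually comes out of the computation: without a further constraint the sum of consecutive products along the cycle can exceed $\Delta$ (take one face value large and the rest small; only the opposite-face relation prevents this). The paper's argument supplies precisely the missing ingredient: by \cite[Proposition 7.1]{BS}, the values on opposite faces of the cube satisfy $a+u=b+v=c+w=:z$, and $\Delta=z^2-2au-2bv-2cw$. This relation first rules out the local type in which both sign classes contain an opposite pair (the $8$-cycle/Petrie type), since a sum of two positives cannot equal a sum of two negatives; in the remaining negative-curvature types one can choose the labels so that $a,u$ and $b,v$ are the opposite-sign pairs and $c,w$ share a sign, whence $\Delta=c^2+w^2-2au-2bv$ is a sum of six positive terms, each at least $\mu_H^2$. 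To complete your proof you would need either to prove your cyclic inequality using this relation, or simply to substitute the paper's identity for your second step in the Gaussian case; as written, the $\GG$ case is not established.
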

\begin{proof}
The Eisenstein case is proven in \cite{BS}, so we prove the Gaussian case.  Consider a vertex at which the ocean of $H$ has negative curvature; such a point exists by \cite[Corollary 6.2]{BS}.  The residue of the topograph at this vertex is a cube, whose faces are labeled by the values of $H$.  The intersection of the ocean with this cube forms a simple closed path on the edges, separating positive-valued faces from negative.
\begin{center}
\begin{tikzpicture}[scale=0.95,  decoration=snake]
\begin{scope}
\foreach \i in {0,1}
\foreach \j in {0,1}
\foreach \k in {0,1}
{
\pgfmathsetmacro{\x}{\i + \k*0.33}
\pgfmathsetmacro{\y}{\j + \k*0.67}
\coordinate (P\i\j\k) at (\x, \y);
}
\draw (P000) -- (P001);
\draw (P000) -- (P010);
\draw (P000) -- (P100);
\draw (P001) -- (P011);
\draw (P001) -- (P101);
\draw[cyan, thick] (P010) -- (P011);
\draw[cyan, thick] (P010) -- (P110);
\draw (P100) -- (P110);
\draw (P100) -- (P101);
\draw[cyan, thick] (P110) -- (P111);
\draw (P101) -- (P111);
\draw[cyan, thick] (P011) -- (P111);

\fill[black!80!green, opacity=0.3] (P010) -- (P110) -- (P111) -- (P011) -- cycle;

\draw (0.5, -0.3) node {(I)};
\end{scope}

\begin{scope}[xshift = 2.5cm]
\foreach \i in {0,1}
\foreach \j in {0,1}
\foreach \k in {0,1}
{
\pgfmathsetmacro{\x}{\i + \k*0.33}
\pgfmathsetmacro{\y}{\j + \k*0.67}
\coordinate (P\i\j\k) at (\x, \y);
}
\draw (P000) -- (P001);
\draw (P000) -- (P010);
\draw (P000) -- (P100);
\draw (P001) -- (P011);
\draw (P001) -- (P101);
\draw[cyan, thick] (P010) -- (P011);
\draw[cyan, thick] (P010) -- (P110);
\draw[cyan, thick] (P100) -- (P110);
\draw[cyan, thick] (P100) -- (P101);
\draw (P110) -- (P111);
\draw[cyan, thick] (P101) -- (P111);
\draw[cyan, thick] (P011) -- (P111);

\fill[black!80!green, opacity=0.3] (P010) -- (P110) -- (P111) -- (P011) -- cycle;
\fill[black!80!green, opacity=0.3] (P110) -- (P111) -- (P101) -- (P100) -- cycle;

\draw (0.5, -0.3) node {(II)};
\end{scope}

\begin{scope}[xshift = 5cm]
\foreach \i in {0,1}
\foreach \j in {0,1}
\foreach \k in {0,1}
{
\pgfmathsetmacro{\x}{\i + \k*0.33}
\pgfmathsetmacro{\y}{\j + \k*0.67}
\coordinate (P\i\j\k) at (\x, \y);
}
\draw (P000) -- (P001);
\draw (P000) -- (P010);
\draw (P000) -- (P100);
\draw[cyan, thick] (P001) -- (P011);
\draw[cyan, thick] (P001) -- (P101);
\draw[cyan, thick]  (P010) -- (P011);
\draw[cyan, thick]  (P010) -- (P110);
\draw[cyan, thick]  (P100) -- (P110);
\draw[cyan, thick]  (P100) -- (P101);
\draw (P110) -- (P111);
\draw (P101) -- (P111);
\draw (P011) -- (P111);

\fill[black!80!green, opacity=0.3] (P010) -- (P110) -- (P111) -- (P011) -- cycle;
\fill[black!80!green, opacity=0.3] (P110) -- (P111) -- (P101) -- (P100) -- cycle;
\fill[black!80!green, opacity=0.3] (P001) -- (P101) -- (P111) -- (P011) -- cycle;

\draw (0.5, -0.3) node {(III)};
\end{scope}

\begin{scope}[xshift = 7.5cm]
\foreach \i in {0,1}
\foreach \j in {0,1}
\foreach \k in {0,1}
{
\pgfmathsetmacro{\x}{\i + \k*0.33}
\pgfmathsetmacro{\y}{\j + \k*0.67}
\coordinate (P\i\j\k) at (\x, \y);
}
\draw[cyan, thick]  (P000) -- (P001);
\draw  (P000) -- (P010);
\draw[cyan, thick]  (P000) -- (P100);
\draw (P001) -- (P011);
\draw[cyan, thick]  (P001) -- (P101);
\draw[cyan, thick] (P010) -- (P011);
\draw[cyan, thick] (P010) -- (P110);
\draw[cyan, thick] (P100) -- (P110);
\draw(P100) -- (P101);
\draw (P110) -- (P111);
\draw[cyan, thick] (P101) -- (P111);
\draw[cyan, thick] (P011) -- (P111);

\fill[black!80!green, opacity=0.3] (P010) -- (P110) -- (P111) -- (P011) -- cycle;
\fill[black!80!green, opacity=0.3] (P110) -- (P111) -- (P101) -- (P100) -- cycle;
\fill[black!80!green, opacity=0.3] (P000) -- (P100) -- (P101) -- (P001) -- cycle;

\draw (0.5, -0.3) node {(IV)};
\end{scope}
\end{tikzpicture}
\end{center}
Form (I) corresponds to a Euclidean vertex; forms (II), (III), and (IV) correspond to ocean-vertices of negative curvature.  Label the values of $H$ on the cube by $a,b,c,u,v,w$, with $a$ opposite $u$, $b$ opposite $v$, and $c$ opposite $w$.  In \cite[Proposition 7.1]{BS}, Bestvina and Savin demonstrate that $a+u = b+v = c+w$.  This excludes Form (IV), since the sum of two positive numbers cannot equal the sum of two negative numbers.  \cite[\S 7]{BS} also gives a formula for the discriminant,
$$\Delta = z^2 - 2au - 2bv - 2cw, \text{ where } z = a+u = b+v = c+w.$$
In forms (II) and (III), we may place $a,u,b,v$ so that $a$ and $u$ have opposite signs, and $b$ and $v$ have opposite signs.  Expressing $z$ as $c+w$ yields
$$\Delta = c^2 + w^2 - 2au - 2bv.$$
As the right side is a sum of positive terms, we find
$$\min \{ \vert a \vert, \vert b \vert, \vert c \vert, \vert u \vert, \vert v \vert, \vert w \vert \} \leq \sqrt{\Delta / 6}.$$
\end{proof}

\section{Real quadratic arithmetic}

In \cite[\S 4]{JW}, Johnson and Weiss give an explicit realization of the Coxeter groups of types $(4, \infty)$ and $(6, \infty)$ as arithmetic groups.  We describe this briefly here.  Let $\sigma = 2$ or $\sigma = 3$, and $R_\sigma = \ZZ[\sqrt{\sigma}]$.  The {\em dilinear group} (our own name) $DL_2(R_\sigma)$ is the group of all matrices $\Matrix{a}{b}{c}{d} \in GL_2(R_\sigma)$ such that
$$(a,d\in \ZZ \cdot \sqrt{\sigma} \text{ and } b,c \in {\ZZ}) \text{ or } (a,d \in \ZZ \text{ and } b,c \in {\ZZ} \cdot \sqrt{\sigma}).$$

Let $DL_2^+(R_\sigma)$ denote its subgroup consisting of matrices with $a,d \in \ZZ$ and $b,c \in \ZZ \cdot \sqrt{\sigma}$.  While $DL_2(R_\sigma)$ is a bit mysterious, $DL_2^+(R_\sigma)$ is $GL_2(\QQ(\sqrt{\sigma}))$-conjugate to a congruence subgroup of $GL_2(\ZZ)$:  if $g = \mathrm{diag}(1,\sqrt{\sigma})$, then 
$$g DL_2(R_\sigma) g^{-1} = \Gamma_0(\sigma) := \left\{ \Matrix{\alpha}{\beta}{\gamma}{\delta} \in GL_2(\ZZ) : \gamma \in \sigma \ZZ \right\}.$$
(We thank an anonymous referee for this insight!)

Define $PDL_2(R_\sigma)=DL_2(R_\sigma)/\{\pm \One \}$.  Johnson and Weiss present $PDL_2(R_\sigma)$ by generators and relations, giving an isomorphism $PDL_2(R_\sigma) \isom (2 \sigma, \infty)$.  Thus we expect arithmetic interpretations of the geometries of types $(4,\infty)$ and $(6, \infty)$.

\subsection{Arithmetic flags} 

We define a ``dilinear'' variant of Conways's topograph as follows.  As always, $\sigma = 2$ or $\sigma = 3$.
\begin{itemize}
	\item
	Faces correspond to {\em primitive lax divectors}: ordered pairs $(u, v\sqrt{\sigma})$ with $u,v \in \ZZ$ and $\GCD(u, \sigma v) = 1$ are called {\em primitive red divectors}.  Ordered pairs $(u \sqrt{\sigma},v)$ with $u,v \in \ZZ$ and $\GCD(\sigma u, v) = 1$ are called {\em primitive blue divectors}.  For laxness, we consider divectors modulo sign.
	\item
	Edges correspond to {\em lax dibases}:  unordered pairs of lax divectors generating $R_\sigma^2$ as an $R_\sigma$-module.  This implies that the divectors have opposite color, and form the rows of a matrix in $DL_2(R_\sigma)$.
	\item
	Points correspond to {\em lax pinwheels}: cyclically ordered $2\sigma$-tuples of lax divectors such that any adjacent pair forms a lax dibasis (and hence has opposite color).
\end{itemize}

\begin{thm}
The geometry of primitive lax divectors, lax dibases, and pinwheels for $R_\sigma$ is equivariantly isomorphic to the Coxeter geometry of type $(2 \sigma,\infty)$.
\end{thm}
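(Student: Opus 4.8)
The plan is to realize the divector geometry as the coset geometry of a simply transitive chamber action of $PDL_2(R_\sigma)$, and then to transport the isomorphism $PDL_2(R_\sigma)\isom(2\sigma,\infty)$ of \cite[\S4]{JW}. Write $W=(2\sigma,\infty)$ with Coxeter generators $S=\{s_0,s_1,s_2\}$, the edge $\{s_0,s_1\}$ carrying the label $2\sigma$ and $\{s_1,s_2\}$ the label $\infty$, so that the maximal parabolic subgroups $W_{\{s_0,s_1\}}$, $W_{\{s_0,s_2\}}$, $W_{\{s_1,s_2\}}$ are dihedral of orders $4\sigma$, $4$, $\infty$ respectively. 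I will use the general principle for coset geometries: if a group $G$ acts on a rank-three incidence geometry transitively on maximal flags and with trivial stabilizer of some chosen maximal flag $(f_0,e_0,p_0)$, then that geometry is $G$-equivariantly isomorphic to the one whose points, edges, faces are the cosets $G/\mathrm{Stab}(p_0)$, $G/\mathrm{Stab}(e_0)$, $G/\mathrm{Stab}(f_0)$, with incidence given by non-empty intersection of cosets. Since the Coxeter geometry of $W$ is exactly the coset geometry of $W$ relative to $W_{\{s_0,s_1\}},W_{\{s_0,s_2\}},W_{\{s_1,s_2\}}$ (as in Conway's $(3,\infty)$ case), it suffices to verify: (i) $PDL_2(R_\sigma)$ acts on the divector geometry; (ii) the action is simply transitive on maximal flags; (iii) the stabilizers of a base flag become, under the isomorphism of \cite{JW}, the three maximal parabolics, matched to the correct types.

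For (i): $DL_2(R_\sigma)$ acts on the right on row vectors of $R_\sigma^2$. A short computation — using that primitivity of $(u,v\sqrt\sigma)$ amounts to unimodularity in $R_\sigma^2$, a property preserved by $GL_2(R_\sigma)$ — shows this action carries primitive red divectors to primitive divectors, toggling the colour precisely when the matrix lies in $DL_2(R_\sigma)\setminus DL_2^+(R_\sigma)$; it therefore permutes primitive lax divectors, hence, preserving $R_\sigma$-spans and bases, permutes lax dibases and lax pinwheels, respecting all incidences. The scalars $\{\pm\One\}$ act trivially, so the action factors through $PDL_2(R_\sigma)$.

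For (ii) and (iii), take as base flag the face $f_0=\pm(1,0)$, the edge $e_0=\{\pm(1,0),\pm(0,1)\}$, and a pinwheel $p_0$ through $e_0$. Since the two divectors of a lax dibasis are the rows of a matrix of $DL_2(R_\sigma)$, the group is transitive on ordered lax dibases; and $\Matrix{0}{1}{1}{0}\in DL_2(R_\sigma)$ fixes $e_0$ while exchanging its two incident faces, so the action is transitive on (edge, incident face) pairs. Extending transitivity to maximal flags then reduces to showing that $\mathrm{Stab}(e_0)\cap\mathrm{Stab}(f_0)$ is transitive on the pinwheels containing $e_0$; a direct matrix computation gives $\mathrm{Stab}(e_0)\cap\mathrm{Stab}(f_0)=\{\,\One,\,\Matrix{1}{0}{0}{-1}\,\}$ modulo $\{\pm\One\}$, and, since the only primitive lax divectors fixed by $\Matrix{1}{0}{0}{-1}$ are $\pm(1,0)$ and $\pm(0,1)$, this involution moves the third divector of $p_0$, hence moves $p_0$ to a distinct pinwheel through $e_0$; so the action is transitive provided $e_0$ lies in exactly two pinwheels, and the same computation shows the stabilizer of $(f_0,e_0,p_0)$ is trivial. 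For the stabilizers themselves: fixing $f_0=\pm(1,0)$ forces a matrix into $\{\,\Matrix{\pm1}{0}{c}{\pm1}:c\in\ZZ\sqrt\sigma\,\}$, whence $\mathrm{Stab}(f_0)=\langle\,\Matrix{1}{0}{\sqrt\sigma}{1},\,\Matrix{1}{0}{0}{-1}\,\rangle$ modulo $\{\pm\One\}$ is infinite dihedral; $\mathrm{Stab}(e_0)=\langle\,\Matrix{1}{0}{0}{-1},\,\Matrix{0}{1}{1}{0}\,\rangle$ is, modulo $\{\pm\One\}$, a Klein four-group; and $\mathrm{Stab}(p_0)$, acting faithfully on the $2\sigma$ divectors of $p_0$ by cyclic-adjacency-preserving permutations, embeds in the dihedral group of order $4\sigma$, with equality once one exhibits among the matrices fixing the underlying point a rotation of order $2\sigma$. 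As reflection subgroups of $W$ via \cite{JW} these three groups are dihedral of orders $\infty$, $4$, $4\sigma$; being the unique reflection subgroups of their isomorphism types up to conjugacy, they are conjugate to $W_{\{s_1,s_2\}}$, $W_{\{s_0,s_2\}}$, $W_{\{s_0,s_1\}}$, and a change of base chamber makes these equalities. The coset-geometry principle then delivers the asserted equivariant isomorphism.

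I expect the main obstacle to be the local geometry around a point — precisely the input that the abstract group isomorphism does not supply. One must show that each point is surrounded by exactly $2\sigma$ divectors forming a single pinwheel (equivalently, that each edge lies in exactly two pinwheels), and that the full dihedral group of order $4\sigma$, in particular a $2\sigma$-fold rotation, is realized inside $PDL_2(R_\sigma)$. Since the defining adjacency conditions alone leave the third divector of a pinwheel extending a given dibasis undetermined, pinning this down genuinely requires the explicit fundamental polygon and presentation behind $PDL_2(R_\sigma)\isom(2\sigma,\infty)$ in \cite[\S4]{JW}; once that local structure is in hand, the remaining verifications are the routine matrix bookkeeping sketched above.
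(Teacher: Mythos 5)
The paper states this theorem without proof, so there is nothing to compare against line by line; judging your argument on its own terms, the coset-geometry strategy is the right one, and your computations of the face and edge stabilizers (infinite dihedral and Klein four, respectively) are correct. But the proof as written has a genuine gap exactly where you say it does, and it is not a small one: the entire content that distinguishes the $(4,\infty)$ geometry from the $(6,\infty)$ geometry lives at the points. You must show (a) that every lax dibasis extends to exactly two lax pinwheels, and (b) that the stabilizer of a pinwheel in $PDL_2(R_\sigma)$ contains a rotation of order $2\sigma$ (e.g.\ for $\sigma=2$ one can exhibit $\Matrix{\sqrt{2}}{1}{-1}{0}$, which satisfies $M^4=-\One$ and cycles a pinwheel), so that the point stabilizer is dihedral of order $4\sigma$ rather than merely a subgroup of it. Without (a) and (b) the hypothesis of your coset-geometry principle — simple transitivity on maximal flags — is not verified, and the argument does not close. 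Deferring this to the fundamental polygon of \cite[\S 4]{JW} is a legitimate way to finish, but then the substance of the proof is being outsourced rather than supplied.

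A second, unacknowledged problem is the step identifying the three stabilizers with the three standard parabolics of $W=(2\sigma,\infty)$: you argue they are ``the unique reflection subgroups of their isomorphism types up to conjugacy,'' which is false for hyperbolic Coxeter groups. Two reflections in ultraparallel lines generate an infinite dihedral reflection subgroup of $W$ that is not conjugate to $W_{\{s_1,s_2\}}$, and similarly finite dihedral reflection subgroups of a given order need not be conjugate to a standard parabolic. The correct way to make this identification is to track the explicit generators: the Johnson--Weiss isomorphism sends specified matrices to $s_0,s_1,s_2$, and one checks directly that your chosen base flag $(f_0,e_0,p_0)$ has stabilizers generated by the images of $\{s_1,s_2\}$, $\{s_0,s_2\}$, $\{s_0,s_1\}$ — equivalently, that the base flag corresponds to the fundamental triangle. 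This replaces the conjugacy argument and simultaneously supplies the local data in (a) and (b), since the fundamental triangle has an interior vertex of angle $\pi/2\sigma$.
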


\begin{figure}[htbp]
\begin{center}
\includegraphics[width=\linewidth]{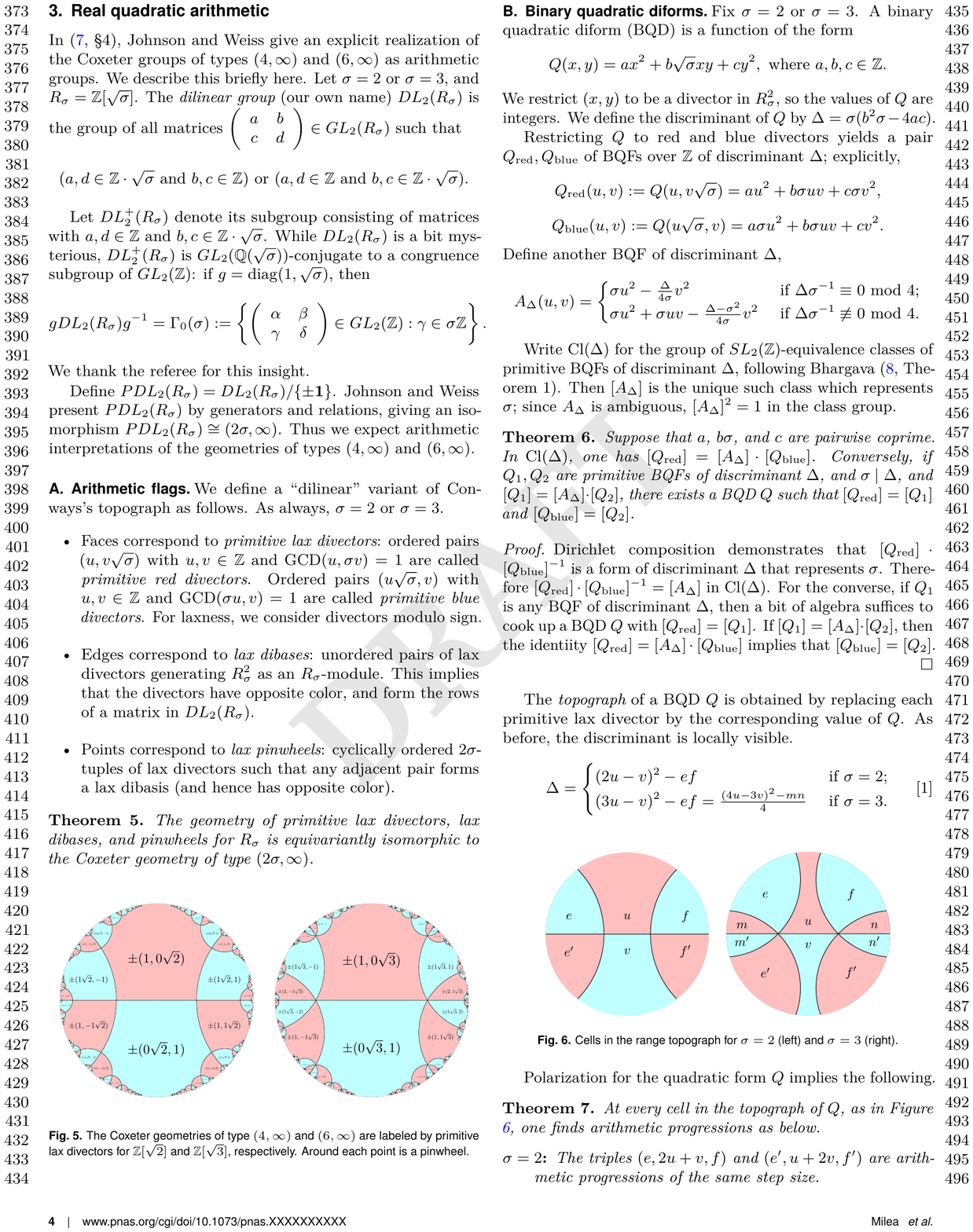}
%
\end{center}
\caption{The Coxeter geometries of type $(4,\infty)$ and $(6,\infty)$ are labeled by primitive lax divectors for $\ZZ[\sqrt{2}]$ and $\ZZ[\sqrt{3}]$, respectively.  Around each point is a pinwheel.}
\label{Pinwheels}
\end{figure}

\subsection{Binary quadratic diforms} 

Fix $\sigma = 2$ or $\sigma = 3$.  A binary quadratic diform (BQD) is a function of the form
$$Q(x,y)=ax^2+b\sqrt{\sigma} xy +cy^2, \text{ where } a,b,c\in \ZZ.$$
We restrict $(x,y)$ to be a divector in $R_\sigma^2$, so the values of $Q$ are integers.  We define the discriminant of $Q$ by $\Delta =\sigma(b^2 \sigma -4ac)$.

Restricting $Q$ to red and blue divectors yields a pair $Q_{\red}, Q_{\blue}$ of BQFs over $\ZZ$ of discriminant $\Delta$; explicitly,
$$Q_{\red}(u, v) := Q(u, v \sqrt{\sigma}) = a u^2 + b \sigma uv + c \sigma v^2,$$
$$Q_{\blue}(u,v) := Q(u \sqrt{\sigma}, v) = a \sigma u^2 + b \sigma uv + c v^2.$$
Define another BQF of discriminant $\Delta$,
$$A_\Delta(u,v) = \begin{cases} 
\sigma u^2 - \frac{\Delta}{4 \sigma} v^2 & \text{ if } \Delta \sigma^{-1} \ident 0 \text{ mod } 4; \\
\sigma u^2 + \sigma u v - \frac{\Delta - \sigma^2}{4 \sigma}v^2  & \text{ if } \Delta \sigma^{-1} \not \ident 0 \text{ mod } 4.
\end{cases}$$

Write $\Cl(\Delta)$ for the group of $SL_2(\ZZ)$-equivalence classes of primitive BQFs of discriminant $\Delta$, following Bhargava \cite[Theorem 1]{Bhargava}.  Then $[A_\Delta]$ is the unique such class which represents $\sigma$; since $A_\Delta$ is ambiguous, $[A_\Delta]^2 = 1$ in the class group.
\begin{thm}
\label{BQFBQD}
Suppose that $a$, $b \sigma$, and $c$ are pairwise coprime.  In $\Cl(\Delta)$, one has $[Q_{\red}] = [A_\Delta] \cdot [Q_{\blue}]$.  Conversely, if $Q_1, Q_2$ are primitive BQFs of discriminant $\Delta$, and $\sigma \mid \Delta$, and $[Q_1] = [A_\Delta] \cdot [Q_2]$, there exists a BQD $Q$ such that $[Q_{\red}] = [Q_1]$ and $[Q_{\blue}] = [Q_2]$.
\end{thm}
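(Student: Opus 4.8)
The plan is to run both directions through Dirichlet's composition of concordant (``united'') forms: for coprime integers $a_1,a_2$ and a common middle coefficient $B$ with $B^2\equiv\Delta\pmod{4a_1a_2}$ one has $[a_1,B,a_2C]\cdot[a_2,B,a_1C]=[a_1a_2,B,C]$ in $\Cl(\Delta)$ (with $C=(B^2-\Delta)/(4a_1a_2)$), and every product of two classes can be put in this shape. I will also use that a primitive form properly represents some integer coprime to any prescribed modulus, together with the description of $[A_\Delta]$ recalled just above: the unique primitive class of discriminant $\Delta$ representing $\sigma$, which is ambiguous, so $[A_\Delta]^2=1$.

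Forward direction. Assuming $a,b\sigma,c$ pairwise coprime, $Q_{\red}=[a,b\sigma,c\sigma]$ and $Q_{\blue}=[a\sigma,b\sigma,c]$ are primitive of discriminant $\Delta$ (this is where the coprimality hypothesis is used). The form $[\sigma,b\sigma,ac]$ is primitive (its content divides $\gcd(\sigma,ac)=1$), has discriminant $(b\sigma)^2-4\sigma ac=\Delta$, and represents $\sigma$ at $(1,0)$, hence $[\sigma,b\sigma,ac]=[A_\Delta]$. Since $\gcd(a,\sigma)$ divides $\gcd(a,b\sigma)=1$ and $(b\sigma)^2-\Delta=4a\sigma c$, the forms $[a,b\sigma,\sigma\cdot c]$ and $[\sigma,b\sigma,a\cdot c]$ are concordant with common middle coefficient $b\sigma$, hence compose to $[a\sigma,b\sigma,c]$. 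Thus $[Q_{\red}]\cdot[A_\Delta]=[Q_{\blue}]$; multiplying by $[A_\Delta]$ and using $[A_\Delta]^2=1$ gives $[Q_{\red}]=[A_\Delta]\cdot[Q_{\blue}]$.

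Converse. From $[Q_1]=[A_\Delta]\cdot[Q_2]$ we get $[A_\Delta]\cdot[Q_1]=[Q_2]$. Pick an integer $a_1$ properly represented by $Q_1$ with $\gcd(a_1,2\sigma)=1$. Since $\gcd(\sigma,a_1)=1$, Dirichlet composition lets us choose a common middle coefficient $B$ with $A_\Delta\sim[\sigma,B,a_1\gamma]$ and $Q_1\sim[a_1,B,\sigma\gamma]$, where $\gamma=(B^2-\Delta)/(4\sigma a_1)\in\ZZ$, and then $[Q_2]=[A_\Delta]\cdot[Q_1]=[\sigma a_1,B,\gamma]$. Now any form $[\sigma,B,\ast]$ of discriminant $\Delta$ has $\sigma\mid B$: indeed $B^2\equiv\Delta\equiv0\pmod\sigma$, and for $\sigma=2$ the hypothesis $\sigma\mid\Delta$ together with $\Delta\equiv0,1\pmod4$ forces $\Delta\equiv0\pmod4$, whence $B$ is even. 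Write $B=\sigma b$. Then the BQD $Q(x,y)=a_1x^2+b\sqrt{\sigma}\,xy+\gamma y^2$ has $Q_{\red}=[a_1,\sigma b,\sigma\gamma]\sim Q_1$ and $Q_{\blue}=[\sigma a_1,\sigma b,\gamma]\sim Q_2$, and its discriminant is $\sigma(b^2\sigma-4a_1\gamma)=B^2-4\sigma a_1\gamma=\Delta$, as required.

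The crux is the converse's use of Dirichlet composition with $\sigma$ itself as one of the two coefficients: this is legitimate even though $\gcd(\sigma,\Delta)\neq1$, because only coprimality of $\sigma$ with the other chosen coefficient $a_1$ is needed — which we arrange — but one must then check that the middle coefficient of the composed form is divisible by $\sigma$, so that $[\sigma a_1,B,\gamma]$ and $[a_1,B,\sigma\gamma]$ literally match the blue and red restrictions of a BQD. A routine but necessary secondary point, used throughout and in particular where the pairwise-coprimality hypothesis enters in the forward direction, is verifying that every form written down is primitive, hence an element of $\Cl(\Delta)$.
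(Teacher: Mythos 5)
Your proof is correct and rests on the same engine as the paper's: Dirichlet composition of united forms, together with the uniqueness of the ambiguous class $[A_\Delta]$ among primitive classes of discriminant $\Delta$ representing $\sigma$. The forward direction is the paper's argument made explicit --- you exhibit the third form $[\sigma, b\sigma, ac]$ and verify concordance and primitivity, which is where the pairwise-coprimality hypothesis genuinely enters. The one organizational difference is in the converse: the paper cooks up a diform with $[Q_{\red}]=[Q_1]$ and then re-invokes the forward identity (which tacitly requires the cooked-up diform to satisfy the coprimality hypothesis), whereas you arrange a single common middle coefficient $B$ so that \emph{both} restrictions land in the prescribed classes at once; your verification that $\sigma\mid B$ --- using $\sigma\mid\Delta$ and, for $\sigma=2$, the fact that an even discriminant is $\equiv 0 \bmod 4$ --- is precisely the content of the paper's ``a bit of algebra,'' and your arrangement avoids having to check the hypothesis of the forward direction for the constructed $Q$.
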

\begin{proof}
Dirichlet composition demonstrates that $[Q_{\red}] \cdot [Q_{\blue}]^{-1}$ is a form of discriminant $\Delta$ that represents $\sigma$.  Therefore $[Q_{\red}] \cdot [Q_{\blue}]^{-1} = [A_\Delta]$ in $\Cl(\Delta)$.
For the converse, if $Q_1$ is any BQF of discriminant $\Delta$, then a bit of algebra suffices to cook up a BQD $Q$ with $[Q_{\red}] = [Q_1]$.  If $[Q_1] = [A_\Delta] \cdot [Q_2]$, then the identiity $[Q_{\red}] = [A_\Delta] \cdot [Q_{\blue}]$ implies that $[Q_{\blue}] = [Q_2]$.

\end{proof}

The {\em topograph} of a BQD $Q$ is obtained by replacing each primitive lax divector by the corresponding value of $Q$.  As before, the discriminant is locally visible.  With $u,v,e,f,m,n$ as in Figure \ref{ArithProgRule}, we have
\begin{equation}
\label{locDelta}
\Delta = \begin{cases} 
(2u-v)^2-ef & \text{ if } \sigma = 2; \\
(3u-v)^2-ef=\frac{(4u-3v)^2-mn}{4} & \text{ if } \sigma = 3.
\end{cases}
\end{equation}  

\begin{figure}[htbp!]
		\centering
		\includegraphics[width=0.8\linewidth]{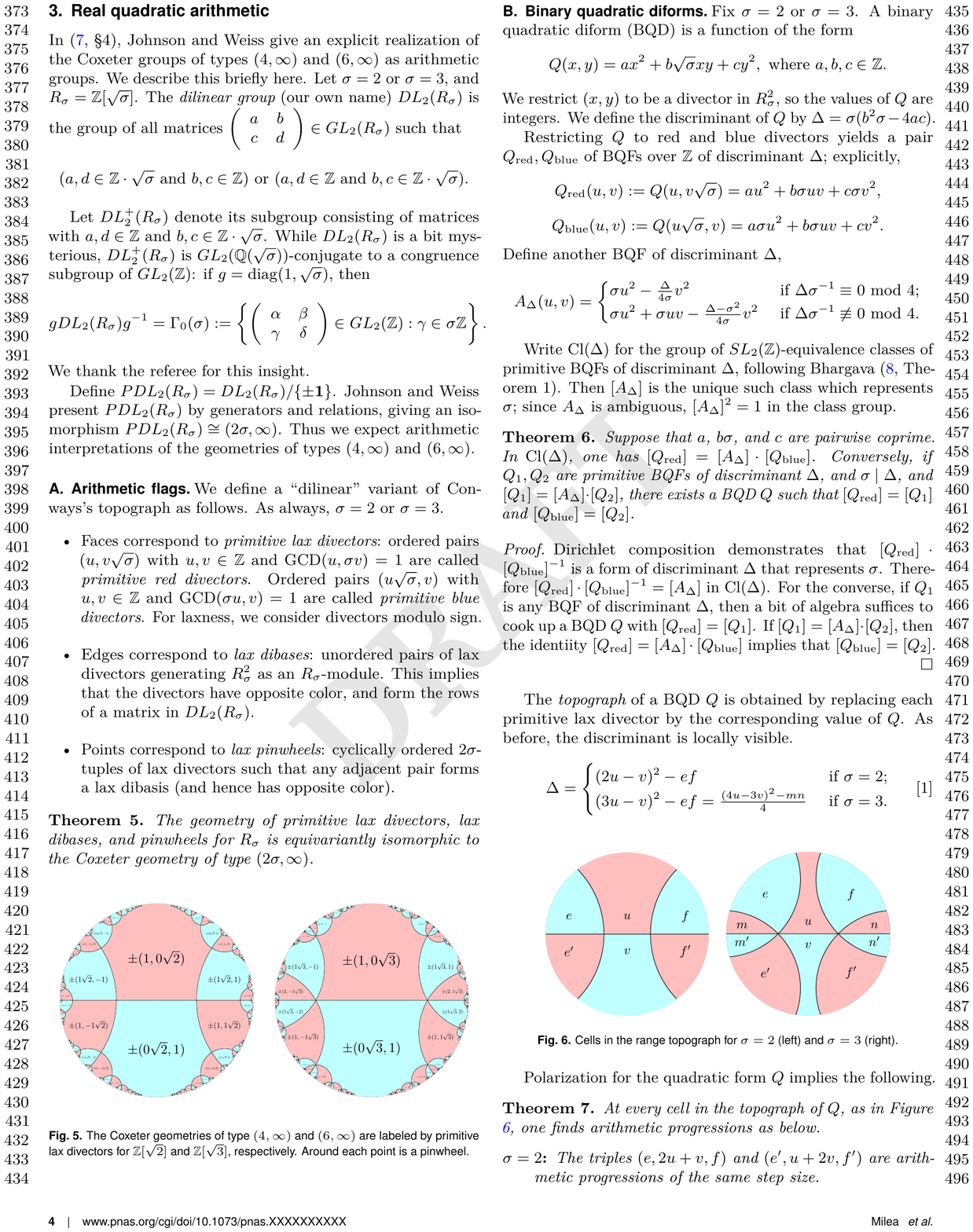}
		\caption{Cells in the range topograph for $\sigma=2$ (left) and $\sigma=3$ (right).}
		\label{ArithProgRule}\end{figure}

Polarization for the quadratic form $Q$ implies the following. 
\begin{thm}
	At every cell in the topograph of $Q$, as in Figure \ref{ArithProgRule}, one finds arithmetic progressions as below.
	\begin{description}
	\item[$\sigma = 2$] The triples $(e, 2u+v, f)$ and $(e', u+2v, f')$ are arithmetic progressions of the same step size. 
	\item[$\sigma = 3$] The triples $(e,3u+v,f)$ and $(e', u + 3v, f')$ are arithmetic progressions of the same step size $\delta$ and the triples $(m,4u+3v,n)$ and $(m', 3u + 4v, n')$ are arithmetic progressions of the same step size $2 \delta$. 
	\end{description}
\end{thm}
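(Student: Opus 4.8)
The plan is to identify, in terms of the central dibasis, which primitive lax divectors occur in a cell, and then to feed them into the polarization identity for $Q$.

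For the setup, recall that a cell is assembled from its central edge, a lax dibasis $\{\vec r,\vec b\}$ with $\vec r$ a primitive red divector, $\vec b$ a primitive blue divector, and $\{\vec r,\vec b\}$ an $R_\sigma$-basis of $R_\sigma^{2}$; the two points of the cell are the pinwheels at the ends of this edge. Since $PDL_2(R_\sigma)\isom(2\sigma,\infty)$ acts transitively on flags, for the combinatorics it suffices to work in the normalization $\vec r=(1,0)$, $\vec b=(0,1)$. There the one fact I need is that a primitive blue divector $(x\sqrt\sigma,y)$ forms a lax dibasis with $(1,0)$ precisely when the determinant $y$ is a unit of $R_\sigma$; as $y\in\ZZ$ has norm $y^{2}$, this forces $y=\pm1$. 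Hence the blue faces meeting a red face $\vec r$ are exactly the lax divectors $\vec b+k\sqrt\sigma\,\vec r$, $k\in\ZZ$, with consecutive ones around the $\infty$-gon face $\vec r$ differing by $\sqrt\sigma\,\vec r$ (and symmetrically with colours reversed). Unwinding the definition of a pinwheel — a cyclic $2\sigma$-tuple whose consecutive pairs are lax dibases — one then finds that the two pinwheels sharing the central edge consist, in $\{\vec r,\vec b\}$-coordinates, of
$$\vec r,\quad \vec b,\quad \vec b\pm\sqrt\sigma\,\vec r,\quad \vec r\pm\sqrt\sigma\,\vec b,$$
together with $2\vec r\pm\sqrt3\,\vec b$ and $2\vec b\pm\sqrt3\,\vec r$ when $\sigma=3$. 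These are the faces carrying the labels $e,f,e',f'$ (and $m,n,m',n'$) of Figure~\ref{ArithProgRule}, while $u=Q(\vec r)$ and $v=Q(\vec b)$.

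Now write $B(\vec x,\vec y)=Q(\vec x+\vec y)-Q(\vec x)-Q(\vec y)$ for the polar symmetric form, so that $Q(\vec x+\lambda\vec y)=Q(\vec x)+\lambda^{2}Q(\vec y)+\lambda B(\vec x,\vec y)$ for every scalar $\lambda\in\QQ(\sqrt\sigma)$, and in particular
$$Q(\vec x+\lambda\vec y)+Q(\vec x-\lambda\vec y)=2Q(\vec x)+2\lambda^{2}Q(\vec y),\qquad Q(\vec x+\lambda\vec y)-Q(\vec x-\lambda\vec y)=2\lambda B(\vec x,\vec y).$$
Taking $(\vec x,\vec y,\lambda)=(\vec b,\vec r,\sqrt\sigma)$ gives $\{e,f\}=\{\sigma u+v\mp\sqrt\sigma\,B(\vec b,\vec r)\}$, so $(e,\sigma u+v,f)$ is an arithmetic progression with step $\delta:=\sqrt\sigma\,B(\vec b,\vec r)$ (an integer); taking $(\vec x,\vec y,\lambda)=(\vec r,\vec b,\sqrt\sigma)$ gives $(e',u+\sigma v,f')$ with step $\sqrt\sigma\,B(\vec r,\vec b)=\delta$, by symmetry of $B$. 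This settles $\sigma=2$. For $\sigma=3$, taking in addition $(\vec x,\vec y,\lambda)=(2\vec r,\vec b,\sqrt3)$ and $(2\vec b,\vec r,\sqrt3)$ gives $(m,4u+3v,n)$ and $(m',3u+4v,n')$, each an arithmetic progression with step $2\sqrt3\,B(\vec r,\vec b)=2\delta$.

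The only step that is not purely formal is the first: confirming that the faces around the central edge of a cell are exactly the divectors listed, equivalently that the two $2\sigma$-gon pinwheels close up in precisely this pattern. This is where the identification with the Coxeter geometry of type $(2\sigma,\infty)$ does the work, and it is what must be checked with care; the $\pm$-ambiguity built into lax divectors is harmless, since the displayed identities are invariant under $\vec r\mapsto-\vec r$ and $\vec b\mapsto-\vec b$ and the set $\{\vec b+k\sqrt\sigma\,\vec r\}$ is symmetric. Everything after the first step is the two-line polarization computation, carried out four times.
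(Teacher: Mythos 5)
Your proof is correct and takes the same route as the paper, whose entire argument is the single sentence ``Polarization for the quadratic form $Q$ implies the following''; you have simply supplied the details, both the polarization identities and the identification of the cell's faces as $\vec b\pm\sqrt\sigma\,\vec r$, $\vec r\pm\sqrt\sigma\,\vec b$ (and $2\vec r\pm\sqrt3\,\vec b$, $2\vec b\pm\sqrt3\,\vec r$ for $\sigma=3$), which the paper leaves to Figure \ref{ArithProgRule}. The determinant checks confirming that these divectors do form the two pinwheels (e.g.\ $\det$ of consecutive rows such as $(\sqrt3,1)$ and $(2,\sqrt3)$ equals $1$) all go through, so the step you flag as needing care is indeed fine.
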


\begin{figure}[tbhp]
\begin{center}
\includegraphics[width=\linewidth]{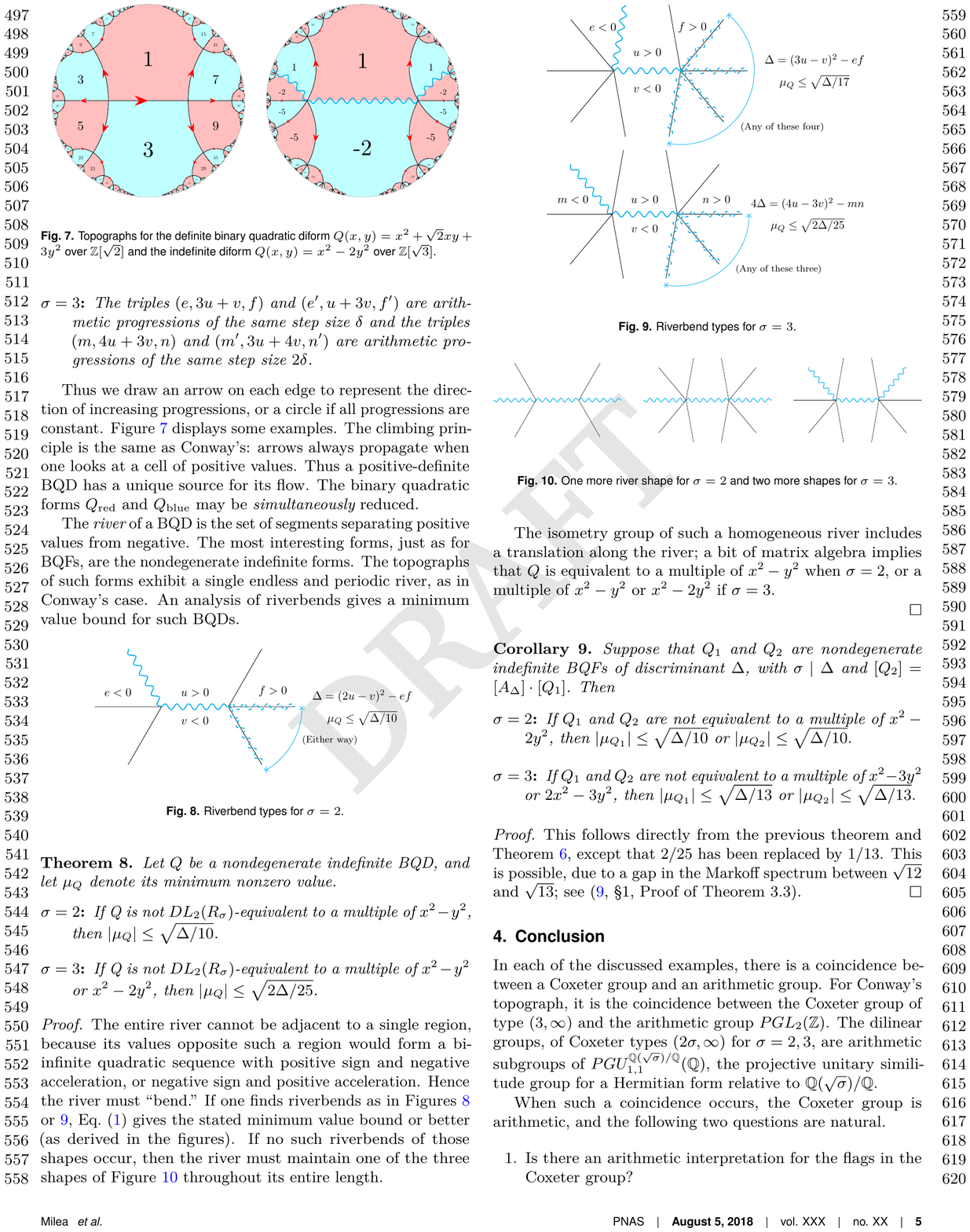}  
%
\end{center}
\caption{Topographs for the definite binary quadratic diform $Q(x,y) = x^2 + \sqrt{2} xy + 3y^2$ over $\ZZ[\sqrt{2}]$ and the indefinite diform $Q(x,y) = x^2 - 2y^2$ over $\ZZ[\sqrt{3}]$. }
\label{BQDTopos}
\end{figure}

Thus we draw an arrow on each edge to represent the direction of increasing progressions, or a circle if all progressions are constant. Figure \ref{BQDTopos} displays some examples. The climbing principle is the same as Conway's:  arrows always propagate when one looks at a cell of positive values.  Thus a positive-definite BQD has a unique source for its flow.  The binary quadratic forms $Q_{\red}$ and $Q_{\blue}$ may be {\em simultaneously} reduced.   

The {\em river} of a BQD is the set of segments separating positive values from negative.  The most interesting forms, just as for BQFs, are the nondegenerate indefinite forms.  The topographs of such forms exhibit a single endless and periodic river, as in Conway's case.  An analysis of riverbends gives a minimum value bound for such BQDs.

\begin{figure}[htbp!]
	\begin{center}
		\includegraphics[width=0.75\linewidth]{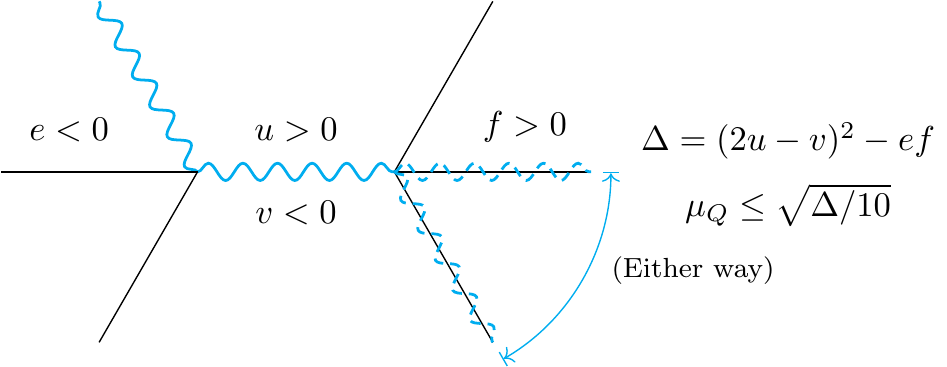} 
	\end{center}
	\caption{Riverbend types for $\sigma=2$.}
	\label{Riverbend_2}
\end{figure}

\begin{figure}[htbp]
	\begin{center}
		\includegraphics[width=0.75\linewidth]{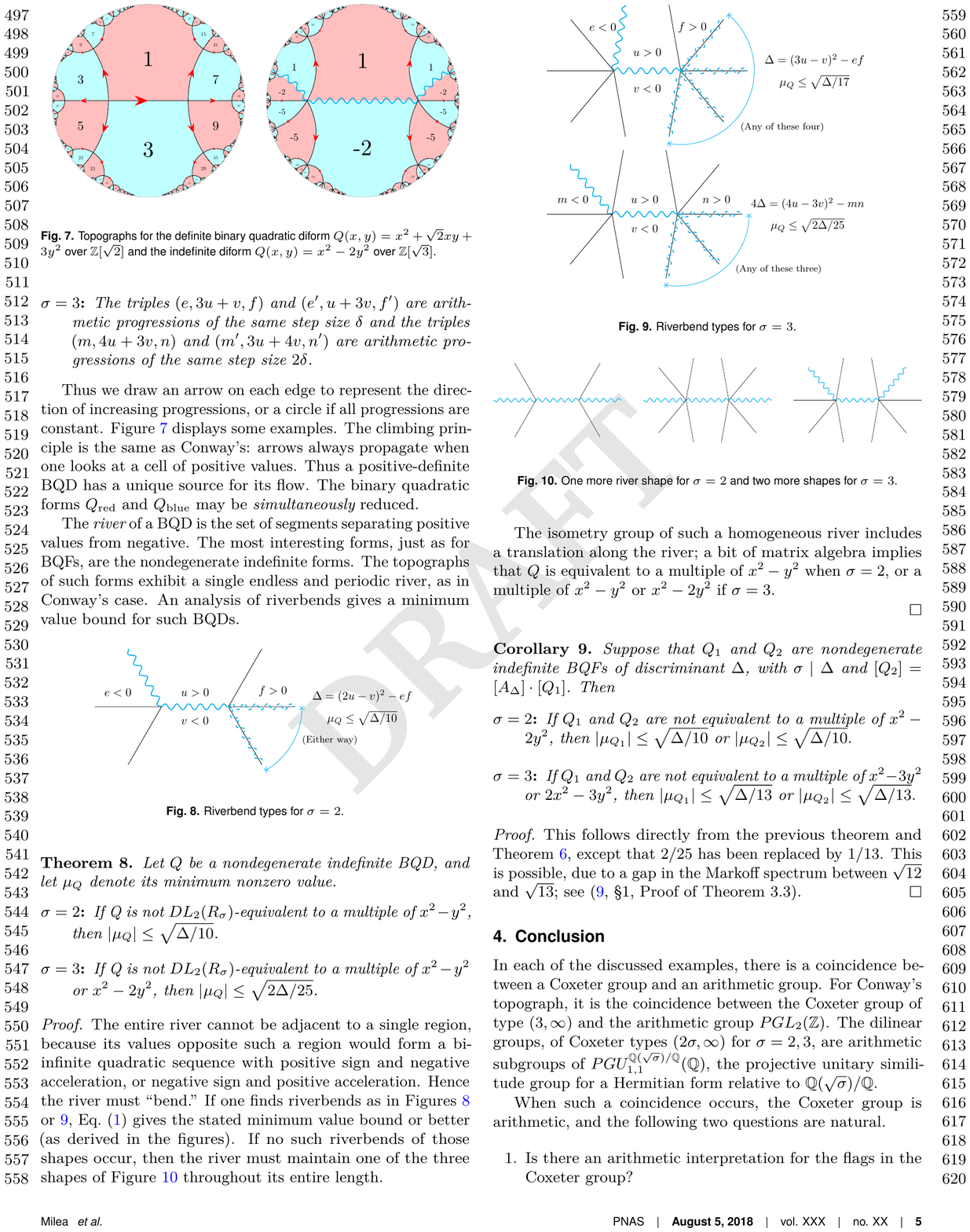}
	\end{center}
	\caption{Riverbend types for $\sigma=3$.}
	\label{Riverbend_3}
\end{figure}

\begin{thm}
	Let $Q$ be a nondegenerate indefinite BQD, and let $\mu_Q$ denote its minimum nonzero value.
	\begin{description}
		\item[$\sigma=2$:] If Q is not $DL_2(R_\sigma)$-equivalent to a multiple of $x^2 - y^2$, then $\vert \mu_Q \vert \leq \sqrt{\Delta / 10}$.
		\item[$\sigma=3$:] If Q is not $DL_2(R_\sigma)$-equivalent to a multiple of $x^2 - y^2$ or $x^2 - 2y^2$, then $\vert \mu_Q \vert \leq \sqrt{2 \Delta / 25}$.
	\end{description}	 
\end{thm}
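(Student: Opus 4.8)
The plan is to run the riverbend argument used above to bound the minimum of an indefinite BQF (and, in the Hermitian setting, by Bestvina and Savin \cite{BS}) in the dilinear topograph. As recorded just before the statement, a nondegenerate indefinite BQD $Q$ has a single endless, periodic river; hence the river either contains a riverbend -- one of the local configurations enumerated in Figures~\ref{Riverbend_2} and~\ref{Riverbend_3} -- or it runs perfectly straight forever, passing through opposite edges at every pinwheel. The first task is to dispose of the straight-river case: I would show that the river of $Q$ contains no riverbend precisely when $Q$ is $DL_2(R_\sigma)$-equivalent to a scalar multiple of $x^2 - y^2$ (for $\sigma = 2$) or of $x^2 - y^2$ or $x^2 - 2y^2$ (for $\sigma = 3$), i.e.\ exactly the excluded forms. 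For $x^2 - y^2$ the conclusion is genuinely false -- over $\ZZ[\sqrt 2]$ one has $\Delta = 8$ while $Q$ is integer-valued and never represents $0$, so $\abs{\mu_Q} \geq 1 > \sqrt{8/10}$ -- which is why these must be excluded. Outside the excluded list the river turns, and it suffices to work at a single riverbend.

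The core is then local, using the discriminant formula \eqref{locDelta}. For $\sigma = 2$: at a riverbend the two faces $u, v$ meeting across the river edge have opposite signs and the two faces $e, f$ at its ends lie on opposite banks, so $uv < 0$ and $ef < 0$; expanding $(2u-v)^2 = 4u^2 - 4uv + v^2$ gives
$$\Delta = 4u^2 + v^2 + 4\abs{uv} + \abs{ef},$$
a sum of $4+1+4+1 = 10$ positive integers. Hence $\min\{u^2, v^2, \abs{uv}, \abs{ef}\} \leq \Delta/10$; since $\abs{uv} \leq \Delta/10$ already forces $\min\{\abs u, \abs v\} \leq \sqrt{\Delta/10}$ (and likewise for $\abs{ef}$), and $u, v, e, f$ are nonzero values of $Q$, this yields $\abs{\mu_Q} \leq \sqrt{\Delta/10}$. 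For $\sigma = 3$ I would apply the same idea to the ``long'' form of \eqref{locDelta}, $4\Delta = (4u-3v)^2 - mn = 16u^2 - 24uv + 9v^2 - mn$: at a riverbend with $uv < 0$ and $mn < 0$ this reads
$$4\Delta = 16u^2 + 9v^2 + 24\abs{uv} + \abs{mn},$$
a sum of $16+9+24+1 = 50$ positive integers, so $\min\{u^2, v^2, \abs{uv}, \abs{mn}\} \leq 4\Delta/50 = 2\Delta/25$, and hence $\abs{\mu_Q} \leq \sqrt{2\Delta/25}$. (When the ``short'' form $\Delta = (3u-v)^2 - ef$ also has straddling $e, f$ one gets the stronger $\sqrt{\Delta/17}$; the constant in the theorem is the worst case, coming from riverbends at which only the long relation has positive cross-terms.)

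The step I expect to be the main obstacle is the part I have glossed over in both reductions. First, one must check riverbend-by-riverbend, over the pictures in Figures~\ref{Riverbend_2} and~\ref{Riverbend_3}, that the sign pattern imposed by the river really does make one of the relations in \eqref{locDelta} a genuine sum of positive integers of the advertised length -- in particular, for $\sigma = 3$, that every bend makes at least one of the two relations work and that the worst bend gives precisely $50$ terms in $4\Delta$; this is a finite verification. Second, and more substantial, is the classification of straight rivers -- proving that the excluded list is exhaustive. I would approach this by tracking the propagation of the river from one pinwheel to the next and showing that going straight at every point forces the sequence of bank-values to be constant, hence (by periodicity) to determine $Q$, and then identifying the resulting forms. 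Granted these two points, everything else is the polarization identity \eqref{locDelta}, already in hand, together with the elementary ``sum of $N$ positive integers'' inequality used in the BQF and Hermitian cases.
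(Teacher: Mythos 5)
Your proposal follows essentially the same route as the paper: the dichotomy between a river that bends and a river of homogeneous shape, the local decomposition of $\Delta$ (resp.\ $4\Delta$) at a riverbend into $10$ (resp.\ $50$) positive terms via \eqref{locDelta} yielding exactly the stated constants, and the identification of the homogeneous rivers with the excluded forms $x^2-y^2$ and $x^2-2y^2$. The two steps you flag as remaining gaps are precisely the ones the paper itself delegates to the case-by-case riverbend figures and to ``a bit of matrix algebra,'' so your sketch matches the published argument in both structure and level of completeness.
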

\begin{proof}
The entire river cannot be adjacent to a single region, because its values opposite such a region would form a bi-infinite quadratic sequence with positive sign and negative acceleration, or negative sign and positive acceleration.  Hence the river must ``bend.''  If one finds riverbends as in Figures \ref{Riverbend_2} or \ref{Riverbend_3}, \eqref{locDelta} gives the stated minimum value bound or better (as derived in the figures).  If no such riverbends of those shapes occur, then the river must maintain one of the three shapes of Figure \ref{riverstraight} throughout its entire length. 

The isometry group of such a homogeneous river includes a translation along the river; a bit of matrix algebra implies that $Q$ is equivalent to a multiple of $x^2-y^2$ when $\sigma = 2$, or a multiple of $x^2 - y^2$ or $x^2 - 2y^2$ if $\sigma = 3$.

\begin{figure}
	\begin{center}
		\includegraphics[width=\linewidth]{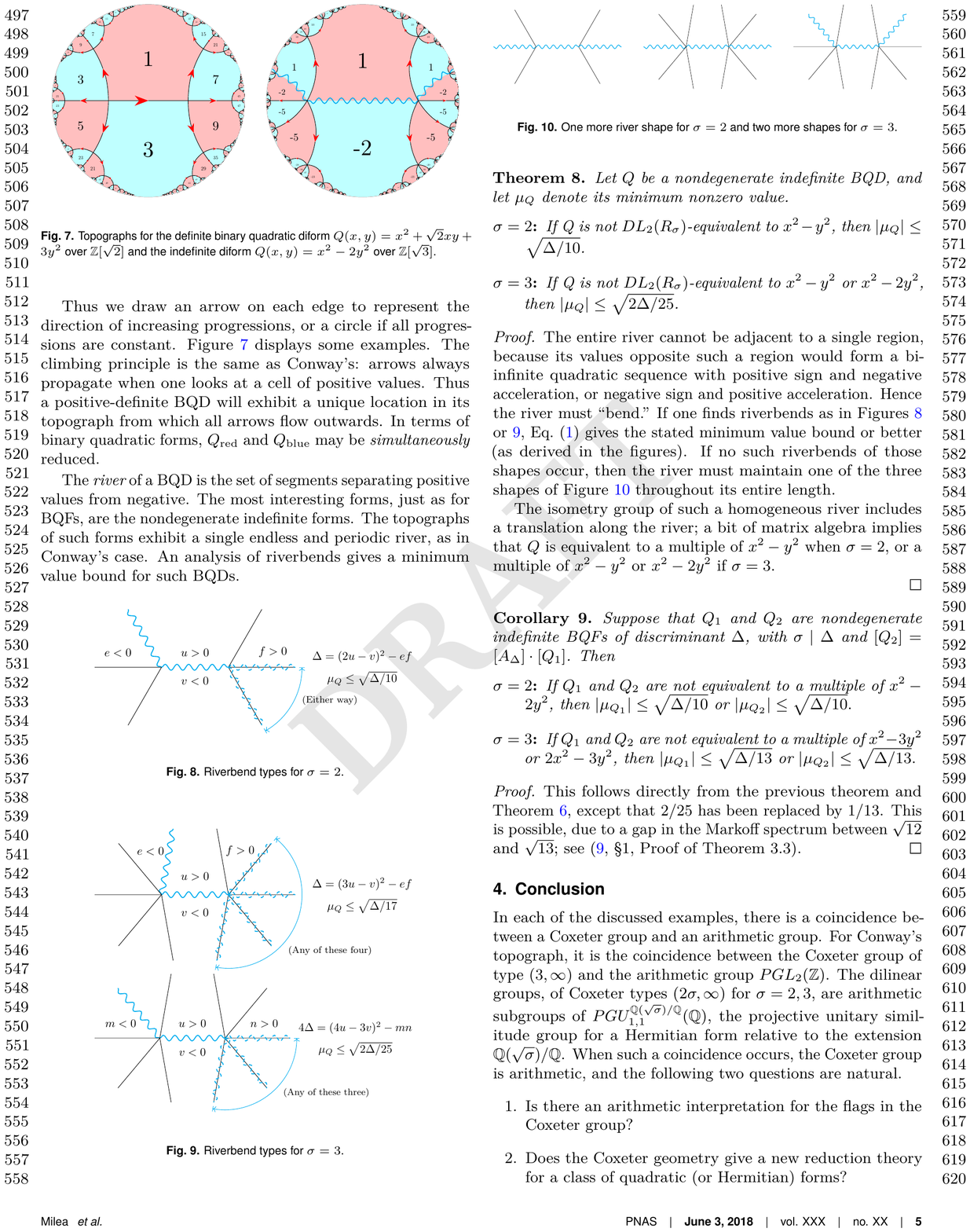}
	\end{center}
	\caption{One more river shape for $\sigma = 2$ and two more shapes for $\sigma = 3$.}
	\label{riverstraight}
\end{figure} 

\end{proof}

\begin{corollary}
Suppose that $Q_1$ and $Q_2$ are nondegenerate indefinite BQFs of discriminant $\Delta$, with $\sigma \mid \Delta$ and $[Q_2] = [A_\Delta] \cdot [Q_1]$.  Then
\begin{description}
\item[$\sigma = 2$:] If $Q_1$ and $Q_2$ are not equivalent to a multiple of $x^2 - 2y^2$, then $\vert \mu_{Q_1} \vert  \leq \sqrt{\Delta / 10}$ or $\vert \mu_{Q_2} \vert \leq \sqrt{\Delta / 10}.$
\item[$\sigma = 3$:] If $Q_1$ and $Q_2$ are not equivalent to a multiple of $x^2 - 3y^2$ or $2x^2 - 3y^2$, then $\vert \mu_{Q_1} \vert \leq \sqrt{\Delta / 13}$ or $\vert \mu_{Q_2} \vert \leq \sqrt{\Delta / 13}.$
\end{description}
\end{corollary}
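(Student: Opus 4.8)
The plan is to bundle $Q_1$ and $Q_2$ into a single binary quadratic diform and reduce to the minimum-value theorem for BQDs. Since $A_\Delta$ is ambiguous, $[A_\Delta]^2=1$, so the hypothesis $[Q_2]=[A_\Delta]\cdot[Q_1]$ is the same as $[Q_1]=[A_\Delta]\cdot[Q_2]$; the converse half of Theorem~\ref{BQFBQD} then produces a BQD $Q$ of discriminant $\Delta$ with $[Q_{\red}]=[Q_1]$ and $[Q_{\blue}]=[Q_2]$. A BQD is integer-valued precisely on the red and blue divectors, and its values there are exactly the values of $Q_{\red}$ on $\ZZ^2$ together with the values of $Q_{\blue}$ on $\ZZ^2$; hence $Q$ is again nondegenerate indefinite and
$$|\mu_Q|=\min\{|\mu_{Q_{\red}}|,|\mu_{Q_{\blue}}|\}=\min\{|\mu_{Q_1}|,|\mu_{Q_2}|\}.$$
So it suffices to bound $|\mu_Q|$.

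For $\sigma=2$ this is immediate. If $Q$ is not $DL_2(R_2)$-equivalent to a multiple of $x^2-y^2$, the preceding theorem gives $|\mu_Q|\le\sqrt{\Delta/10}$. And if $Q\sim\lambda(x^2-y^2)$, then $Q_{\red}\sim\lambda(x^2-2y^2)$ and $Q_{\blue}\sim\lambda(2x^2-y^2)$; since $x^2-2y^2$ represents $-1$ it is equivalent to its opposite, so $2x^2-y^2\sim x^2-2y^2$, and both $Q_1$ and $Q_2$ are multiples of $x^2-2y^2$, contradicting the hypothesis. So that case does not occur, and we are done.

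The case $\sigma=3$ is the crux: the BQD theorem yields only $|\mu_Q|\le\sqrt{2\Delta/25}$, and $2/25>1/13$, so it is strictly weaker than the claim. I would argue as follows. If $Q\sim\lambda(x^2-y^2)$ then $Q_{\red}\sim\lambda(x^2-3y^2)$, so one of $Q_1,Q_2$ is a multiple of $x^2-3y^2$, the hypothesis fails, and there is nothing to prove. If $Q\sim\lambda(x^2-2y^2)$ then $\{Q_{\red},Q_{\blue}\}\sim\{\lambda(x^2-6y^2),\lambda(3x^2-2y^2)\}$; both forms represent $\lambda$ and $\Delta=24\lambda^2$, so $\min\{|\mu_{Q_1}|,|\mu_{Q_2}|\}\le|\lambda|=\sqrt{\Delta/24}<\sqrt{\Delta/13}$. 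In every remaining case the river of $Q$ has no homogeneous stretch of the type classified in Figure~\ref{riverstraight}, so it genuinely bends; I would then re-run the riverbend analysis behind the previous theorem, applying the local identity~\eqref{locDelta} and the arithmetic-progression rule of Figure~\ref{ArithProgRule} at each bend type of Figure~\ref{Riverbend_3}, but this time extracting the sharper constant $13$ wherever possible. The bend configurations whose discriminant identity delivers no better than $25/2$ should be exactly those whose translational symmetry, by propagation along the periodic river, forces $Q_{\red}$ or $Q_{\blue}$ to be a multiple of $x^2-3y^2$ or $2x^2-3y^2$ (equivalently, $Q$ to be a multiple of $x^2-y^2$ or $2x^2-y^2$), or forces $Q\sim\lambda(x^2-2y^2)$; all of these are already excluded by the hypothesis or disposed of above. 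So a river containing \emph{any} other bend exhibits a divector with $|Q|\le\sqrt{\Delta/13}$, giving $|\mu_Q|\le\sqrt{\Delta/13}$.

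The main obstacle is precisely this last step for $\sigma=3$: classifying the riverbend and near-homogeneous river shapes according to whether~\eqref{locDelta} yields the constant $13$ or only $25/2$, and checking that each shape of the latter kind forces $Q_{\red}$ or $Q_{\blue}$ into one of the excluded classes. This is a finite but delicate case analysis with the local discriminant formula; everything else — the passage to a BQD, the identity $|\mu_Q|=\min\{|\mu_{Q_1}|,|\mu_{Q_2}|\}$, and the bookkeeping with $\Cl(\Delta)$ and with the dictionary between BQDs and pairs of BQFs — is formal.
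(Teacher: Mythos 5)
Your reduction to a single BQD via Theorem~\ref{BQFBQD}, the identity $\vert\mu_Q\vert=\min\{\vert\mu_{Q_1}\vert,\vert\mu_{Q_2}\vert\}$, the complete treatment of $\sigma=2$, and the disposal of the exceptional cases $Q\sim\lambda(x^2-y^2)$ and $Q\sim\lambda(x^2-2y^2)$ for $\sigma=3$ are all correct and are exactly the intended first half of the argument. But the step you yourself flag as ``the main obstacle'' is a genuine gap: you do not actually carry out the sharpened riverbend analysis for $\sigma=3$, you only conjecture that every bend configuration yielding no better than $2\Delta/25$ must force $Q$ into one of the excluded classes. As written, the $\sigma=3$ case of the corollary is therefore unproved --- you have $\vert\mu_Q\vert\leq\sqrt{2\Delta/25}$ from the theorem and a plan, not a proof, for improving $2/25$ to $1/13$.

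The missing idea is that no new topographic case analysis is needed. The quantity $\sqrt{\Delta}/\vert\mu_Q\vert$ for a nondegenerate indefinite form lies in the Markoff spectrum, and that spectrum has a gap: it contains no values strictly between $\sqrt{12}$ and $\sqrt{13}$ (see \cite[\S 1, Proof of Theorem 3.3]{Hall}). The bound $\vert\mu_Q\vert\leq\sqrt{2\Delta/25}$ says $\sqrt{\Delta}/\vert\mu_Q\vert\geq\sqrt{12.5}$, which falls inside the gap, so automatically $\sqrt{\Delta}/\vert\mu_Q\vert\geq\sqrt{13}$, i.e.\ $\vert\mu_Q\vert\leq\sqrt{\Delta/13}$. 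This one-line appeal replaces your proposed finite-but-delicate classification of bend shapes entirely; your combinatorial route might well succeed, but it would amount to re-deriving a piece of the Markoff gap by hand, and until that classification is actually executed and each ``bad'' shape is shown to force an excluded form, the argument is incomplete.
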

\begin{proof}
This follows directly from the previous theorem and Theorem \ref{BQFBQD}, except that $2/25$ has been replaced by $1/13$.  This is possible, due to a gap in the Markoff spectrum between $\sqrt{12}$ and $\sqrt{13}$; see \cite[\S 1, Proof of Theorem 3.3]{Hall}.
\end{proof}

\section{Conclusion}

In each of the discussed examples, there is a coincidence between a Coxeter group and an arithmetic group.  For Conway's topograph, it is the coincidence between the Coxeter group of type $(3, \infty)$ and the arithmetic group $PGL_2(\ZZ)$.  The dilinear groups, of Coxeter types $(2 \sigma, \infty)$ for $\sigma=2,3$, are arithmetic subgroups of $PGU_{1,1}^{\QQ(\sqrt{\sigma}) / \QQ}(\QQ)$, the projective unitary similitude group for a Hermitian form relative to $\QQ(\sqrt{\sigma}) / \QQ$.  

When such a coincidence occurs, the Coxeter group is arithmetic, and the following two questions are natural.

\begin{enumerate}
\item
Is there an arithmetic interpretation for the flags in the Coxeter group?
\item
Does the Coxeter geometry give a new reduction theory for a class of quadratic (or Hermitian) forms?
\end{enumerate}

The first question is reminiscent of the classical theory of flag varieties.  When $\alg{G}$ is a simple simply-connected linear algebraic group over a field $k$, one can often identify a ``standard'' representation of $\alg{G}$ on a $k$-vector space $V$.  Every $k$-parabolic subgroup of $\alg{G}$ is the stabilizer of some sort of $k$-flag in $V$.  If $\alg{G}$ is a symplectic or spin or unitary group, these are the isotropic flags in the standard representation.  In type $G_2$, these are the nil-flags in the split octonions.  In an 11-part series of papers ({\em Beziehungen der $\mathfrak{E}7$ und $\mathfrak{E}8$ zur Oktavenebene I--XI}, published 1954--63, ending with \cite{Freudenthal}), Freudenthal studied the ``metasymplectic'' geometry which describes flags in representations of exceptional groups.

Now it appears that arithmetic Coxeter groups provide a parallel industry, examining their representations on various modules over Euclidean domains.  Arithmetic flags are generalized bases of these modules.  The geometry of arithmetic flag varieties seems (so far) to be the combinatorial geometry of Coxeter groups.  We do not yet see algebraic geometry in the picture, as one finds in flag varieties $\alg{G} / \alg{P}$.

The applications to arithmetic (the {\em arithmetic} of arithmetic Coxeter groups) include Conway's approach to binary quadratic forms and new generalizations.  The reduction theory for quadratic and Hermitian forms is classical subject, sometimes tedious in its algebra -- the Coxeter geometry and Conway's theory of wells and rivers gives an intuitive approach.  Beyond reframing old results, it seems unlikely that one would find the reduction theory of our ``diforms'' (or suitable pairs of binary quadratic forms) without considering the Coxeter group.  In this way, arithmetic Coxeter groups offer new applications to number theory.

This paper has discussed five arithmetic Coxeter groups, of types $(3, \infty)$, $(3,3,6)$, $(3,4,4)$, $(4, \infty)$, and $(6, \infty)$.  If this is a game of coincidences, when might it end?  In \cite{CoxSurvey}, Belolipetsky surveys the arithmetic {\em hyperbolic} Coxeter groups; following his treatment, we review the classification of such Coxeter groups.  

The groups we have studied are {\em simplicial hyperbolic} arithmetic Coxeter groups.  In \cite{Vinberg}, Vinberg proves there are 64 such groups in dimension at least 3.  These fall into 14 commensurability classes by \cite{JKRT}, as shown in Table \ref{SHAcox}.  It would not be surprising if each offered a notion of arithmetic flags (e.g., superbases, etc.) and quadratic/Hermitian forms.  For example, the Coxeter group of type $(3,3,3,4,3)$ is arithmetic, commensurable with $PGL_2(A)$ where $A$ is the Hurwitz order in the quaternion algebra $\QQ + \QQ i + \QQ j + \QQ k$.  Arithmetic flags in this case can be interpreted as lax vectors, bases, superbases, 3-simplex-bases, 4-simplex-bases, and 5-orthoplex-bases, in the $A$-module $A^2$.

\begin{table}[thbp!]
\centering
\caption{Commensurability classes of simplicial hyperbolic arithmetic Coxeter groups of dimension at least 3, (extracted from \cite{JKRT}).}
\label{SHAcox}
\begin{tabular}{lc}
Dimension & Coxeter Types \\
\midrule
3 & $(3,3,6)$ and $(3,4,4)$ \\
4 & $(3,3,3,5)$ and $(3,3,3,4)$ and $(3,4,3,4)$  \\
5 & $(3,3,3,4,3)$ and $(3,3^{[5]})$ \\
6 & $(4,3^2,3^{2,1})$ and $(3,3^{[6]})$ \\
7 & $(3^{2,2,2})$ and $(4,3^3,3^{2,1})$ and $(3,3^{[7]})$ \\
8 & $(3^{4,3,1})$ \\
9 & $(3^{6,2,1})$ \\
\bottomrule
\end{tabular}
\end{table}

Table \ref{SHAcox} only displays groups of dimension at least three.  In dimension two, we find Conway's topograph and its dilinear variants.  One might also consider arithmetic hyperbolic {\em triangle} groups, classified by Takeuchi in \cite{Takeuchi}, \cite{Takeuchi2}.  Up to commensurability, there are 19 of these, each associated to a quaternion algebra over a totally real field.  Vertices, edges, and triangles in the resulting hyperbolic tilings surely correspond to arithmetic objects -- what are they?  

If one wishes to depart from the simplicial groups, there are non-simplicial arithmetic hyperbolic Coxeter groups.  By results of Vinberg \cite{Vinberg81}, all examples occur in dimension at most 30; there are finitely many up to commensurability.  One may be able to explore the arithmetic of arithmetic Coxeter groups for a long time -- what is currently missing is a general theory of arithmetic flags and forms to make predictions in a less ad hoc manner. 

Departing the setting of Coxeter groups may also be appealing, especially in low dimension.  For example, the Coxeter geometry makes the reduction theory of binary Hermitian forms particularly nice over $\ZZ[i]$ and $\ZZ[\omega]$.  But Bestvina and Savin \cite{BS} are able to work over other quadratic imaginary rings although the geometry lacks homogeneity.  One might study diforms over other real quadratic rings, in the same way.  More arithmetic may be found in ``thin'' rather than arithmetic groups, e.g., in the work of Stange \cite{Stange} on Apollonian circle packings.  Still, Coxeter groups seem an appropriate starting place, where arithmetic applications are low-hanging fruit.

\section*{Acknowledgment}

M. Weissman is supported by the Simons Foundation Collaboration Grant \#426453.  The authors thank an anonymous referee for comments and insights.  The Eisenstein topograph was first described in 2007, in the unpublished Masters thesis of Andreas Weinert.

\bibliographystyle{amsalpha}
\bibliography{ArithHyp}

\end{document}